\newtheorem{theorem}{Theorem}
\newtheorem{proposition}[theorem]{Proposition}
\newtheorem{corollary}[theorem]{Corollary}
\theoremstyle{definition}
\newtheorem{definition}[theorem]{Definition}
\newtheorem{example}[theorem]{Example}
\theoremstyle{remark}
\newtheorem{remark}[theorem]{Remark}
\newcommand{\N}{\mathbb{N}} 
\newcommand{\Z}{\mathbb{Z}} 
\newcommand{\R}{\mathbb{R}} 
\newcommand{\K}{\mathbb{K}} 
\newcommand{\A}{\mathcal{A}}
\newcommand{\abs}[1]{\left\lvert#1\right\rvert}
\newcommand{\orb}{\operatorname{Orb}}
\begin{document}

\title{Recurrence properties of hypercyclic operators
 \thanks{This work is supported in part by MICINN and FEDER, Project MTM2013-47093-P, and by GVA, Project  PROMETEOII/2013/013. The second author is supported by a grant of FRIA.}}


\author{J. B\`{e}s \footnote{Department of Mathematics and Statistics, Bowling Green State University, Bowling Green, OH 43403, USA. e-mail:jbes@math.bgsu.edu },  Q. Menet\footnote{D\'{e}partement de Math\'{e}matique, Universit\'{e} de Mons, 20 Place du Parc, 7000 Mons,
Belgique. e-mail: Quentin.Menet@umons.ac.be}, A. Peris\footnote{IUMPA, Universitat Polit\`{e}cnica de Val\`{e}ncia, Departament de Matem\`{a}tica Aplicada, Edifici 7A, 46022 Val\`{e}ncia, Spain. e-mail: aperis@mat.upv.es} and Y. Puig\footnote{Universit\`{a} degli studi di Milano, Dipartimento di Matematica "Federigo Enriques", Via Saldini 50 - 20133 Milano, Italy. e-mail: yunied.puig@unimi.it}}

\date{ }

\maketitle

\begin{abstract}
We generalize the notions of hypercyclic operators, $\mathfrak{U}$-frequently hypercyclic operators and frequently hypercyclic operators by introducing a new notion of hypercyclicity, called $\A$-frequent hypercyclicity. We then state an $\A$-Frequent Hypercyclicity Criterion, inspired from the Hypercyclicity Criterion and the Frequent Hypercyclicity Criterion, and we show that this criterion characterizes the $\A$-frequent hypercyclicity for weighted shifts. We finish by investigating which kind of properties of density can have the sets ${N(x, U)=\{n\in \N:T^nx\in U\}}$ for a given hypercyclic operator and study the new notion of reiteratively hypercyclic operators.
\end{abstract}


\section{Introduction}

Our purpose is to study how `often' the orbit of a vector under a linear operator can meet arbitrary non-empty open sets. The (chaotic) dynamics of linear operators is a very active topic of research, and the books \cite{BM09} and \cite{GEP11} contain many of the recent advances.

Let $X$ be a separable $F$-space (i.e., a metrizable and complete topological vector space), $L(X)$ the space of continuous linear operators on $X$, and $T\in L(X)$. Given $x\in X$, its \emph{orbit} under $T$ is $\orb (x,T)=\{ x,Tx,T^2x,\dots \}$. We denote by $\N$ the set of positive integers, by $\Z_+$ the set of non-negative integers and for any $x\in X$, any subset $V\subset Y$, we denote the times set of $\orb (x,T)$ hitting $V$ by
$$
N(x,V)=\{n\ge 0:T^nx\in V\}.
$$
Given $A\subseteq \Z_+$, its \emph{upper and lower densities} are defined, respectively, by
    \[
    \overline{d}(A)=\limsup_{k\to\infty}\frac{|A\cap [0,k]|}{k}  \ \ \mbox{ and } \ \ 
     \underline{d}(A)=\liminf_{k\to\infty}\frac{|A\cap [0,k]|}{k}.
    \]

The operator $T$ is said to be \emph{hypercyclic} if there exists $x\in X$ such that for any non-empty open set $V\subset X$, $N(x,V)$ is non-empty (or, equivalently since $X$ has no isolated points, infinite). It is $\mathfrak{U}$-\emph{frequently hypercyclic} if there exists $x\in X$ such that for any non-empty open set $V$, $N(x,V)$ is a set of positive upper density, and
it is \emph{frequently hypercyclic} if there exists $x\in X$ such that for any non-empty open set $V\subset Y$, $N(x,V)$ is a set of positive lower density. Frequently hypercyclic operators were introduced by Bayart and Grivaux \cite{Bayart1} while $\mathfrak{U}$-frequently hypercyclic operators were introduced by Shkarin \cite{Shkarin}.

We generalize these notions of hypercyclicity as follows:

\begin{definition}\label{univset}
Let $\A\subset \mathcal{P}(\Z_+)$ be a non-trivial hereditarily upward family of subsets of $\Z_+$ (i.e., $\emptyset\notin \A$ and 
 for any $A\in \A$, if $A\subset B$, then $B\in \A$). If, moreover,   
\begin{enumerate}
\item[(*)] $\A$ contains a sequence $(A_k)$ of disjoint sets such that for any $j\in A_k$, any $j'\in A_{k'}$, $j\ne j'$, we have $|j'-j|\ge \max\{k,k'\}$, 
\end{enumerate}
then we say that $\A$ is a \emph{hypercyclicity set}. Given a non-trivial hereditarily upward family $\A\subset \mathcal{P}(\Z_+)$ and $T\in L(X)$, the operator  $T$ is called 
\emph{$\A$-frequently hypercyclic} if there exists $x\in X$ such that for any non-empty open set $V\subset X$, $N(x,V)\in \A$. Such a vector $x$ is called an $\A$-frequently hypercyclic 
vector for $T$.
\end{definition}

An operator $T$ is thus hypercyclic, $\mathfrak{U}$-frequently hypercyclic or frequently hypercyclic if it is $\A$-frequently hypercyclic, respectively, for
$\A_\infty$ (the family of infinite subsets of $\Z_+$), for the set of positive upper density sets, or for the set of positive lower density sets. This concept also generalizes the notions of $(m_k)$-hypercyclic operators introduced by Bayart and Matheron~\cite{Bayart3}.

For  hypercyclic operators and frequently hypercyclic operators, we have at our disposal the well-known Hypercyclicity Criterion~\cite{Bes} and the Frequent Hypercyclicity Criterion~\cite{4Bonilla}. We generalize these criteria to the framework of $\A$-frequently hypercyclic operators (Section~\ref{Sec: Crit}). In particular, we obtain a $\mathfrak{U}$-Frequent Hypercyclicity Criterion. The $\A$-Frequent Hypercyclicity Criterion also improves the Frequent Hypercyclicity Criterion. Indeed, we know that the Frequent Hypercyclicity Criterion does not characterize frequently hypercyclic operators because Bayart and Grivaux~\cite{2Bayart1} have exhibited a frequently hypercyclic weighted shift on $c_0$ that is neither chaotic nor mixing, whereas if $T$ satisfies the Frequent Hypercyclicity Criterion, then $T$ is mixing and chaotic. However, we succeed to show that, in the case of weighted shifts on $\ell^p$ or $c_0$, the $\A$-Frequent Hypercyclicity Criterion characterizes the $\A$-frequent hypercyclicity (Section~\ref{SEC Shift}) and thus, in particular, an operator $T$ can satisfy the $\A$-Frequent Hypercyclicity Criterion even if it is neither chaotic nor mixing. This characterization of $\A$-frequently hypercyclic weighted shifts complements the characterization of hypercyclic weighted shifts~\cite{2Grosse3,2Salas} and the recent characterization of frequently hypercyclic weighted shifts on $\ell^p$ and $c_0$~\cite{Bayart2}.

In Section~\ref{dens} we will concentrate in $\A$-frequently hypercyclic operators for families $\A$ with positive Banach densities.

   \begin{definition}
Let $A\subseteq \Z_+$, $\alpha^s:=\limsup_{k\to \infty} |A\cap [k+1, k+s]|$ and $\alpha_s:=\liminf_{k\to \infty} |A\cap [k+1, k+s]|$.
    The \emph{upper and lower Banach densities} are defined respectively by
    \[
    \overline{Bd}(A)=\lim_{s\to\infty}\frac{\alpha^s}{s} \ \ \mbox{ and } \ \ 
     \underline{Bd}(A)=\lim_{s\to\infty}\frac{\alpha_s}{s}.
    \]
       \label{definition.density}
   \end{definition}
The above densities are well-defined for any set $A\subseteq \Z_+$. A proof of this fact can be found in \cite{Salat} and we have
    \begin{equation}
    \label{eq.density}
    \underline{Bd}(A)\leq \underline{d}(A)\leq \overline{d}(A) \leq \overline{Bd}(A). 
    \end{equation}
For any choice of real numbers $0\leq r_1\leq r_2 \leq r_3 \leq r_4 \leq 1$, one can even find sets $A$ such that
 \[
 \underline{Bd}(A)=r_1, \underline{d}(A)=r_2, \overline{d}(A)=r_3, \overline{Bd}(A)=r_4.
 \]
  This result has been recently announced, and will appear in a joint paper by G. Grekos, R. Jin and L. Mi$\check{s}$\'{i}k.

Suppose we want to classify and study all the recurrence properties appearing when $N(x, U)$ has a lower or upper (Banach) density  greater than 0 or equal to 1. We show that only the following properties make sense.
 \begin{itemize}
 \item $\underline{d} \big(N(x, U)\big)>0$, i.e.  {$T$ is frequently hypercyclic}
  \item $\overline{d} \big(N(x, U)\big)>0$, i.e.  {$T$ is $\mathfrak{U}$-frequently hypercyclic}
   \item $\overline{Bd} \big(N(x, U)\big)>0$, i.e. \emph{$T$ is reiteratively hypercyclic}.
 \end{itemize}

We then focus on the new notion of reiteratively hypercyclic operators. Obviously, frequently hypercyclic operators are $\mathfrak{U}$-frequently hypercyclic and these in turn are reiteratively hypercyclic. In a recent article, Bayart and Ruzsa have shown the existence of a $\mathfrak{U}$-frequently hypercyclic weighted backward shift on $c_0$ which is not frequently hypercyclic, see \cite[Theorem~5]{Bayart2}.
We complement this result by showing that reiterative hypercyclicity is not equivalent to $\mathfrak{U}$-frequent hypercyclicity. More precisely, thanks to the characterization of $\mathcal{A}$-frequently hypercyclic weighted shifts, we show that there exists a reiteratively hypercyclic weighted backward shift on $c_0$ which is not $\mathfrak{U}$-frequently hypercyclic (Theorem~\ref{counterexc0}). On the other hand, Bayart and Ruzsa proved that frequent hypercyclicity and $\mathfrak{U}$-frequent hypercyclicity are equivalent notions for backward shifts on $\ell^p(\Z)$ and $\ell^p(\Z_+)$, see \cite[Theorems~3-4]{Bayart2}. We generalize this equivalence by showing that every reiteratively hypercyclic weighted shift on $\ell^p(\Z)$ or on $\ell^p(\Z_+)$ is frequently hypercyclic (Theorem~\ref{caraclp}). In particular, we deduce that there exists some mixing operator which is not reiteratively hypercyclic. Finally, we observe that any reiteratively hypercyclic operator is topologically ergodic, i.e.
$$
N(U, V):=\{n\ge 0 \ ; \ T^nU\cap V\ne \emptyset\}
$$
is syndetic (has bounded gaps) for any $U, V$ non-empty open sets. Since topologically ergodic operators are weakly mixing \cite{GEP10}, we deduce that any reiteratively hypercyclic operator $T$ is weakly mixing.


\section{$\A$-frequently hypercyclic operators}

The definition of $\A$-frequently hypercyclic operators justifies the property (*) of hypercyclicity sets (Definition~\ref{univset}). Indeed, the fact that $\A$ is non-trivial provides that, for any non-empty open set $V$, $N(x,V)$ is at least non-empty. The hereditarily upward condition implies that, for any non-empty open sets $U\subset V$, if $N(x,U)\in \A$, then $N(x,V)\in \A$. In particular, if $X$ is a topological vector space with a countable open basis $(U_n)_{n\ge 1}$, we deduce that in order to prove that a vector $x$ is $\A$-frequently hypercyclic, it is sufficient to prove that $N(x,U_n)\in\A$ for each $n\ge 1$. Finally,  condition (*) is necessary for the existence of $\A$-frequently hypercyclic operators on Banach spaces.

\begin{proposition}\label{propban}
Let $X$ be a separable Banach space, $X\ne \{0\}$, let $\A \subset \mathcal{P}(\Z_+)$ be a non-trivial hereditarily upward family, and let $T\in \mathcal{L}(X)$.
If $T$ is $\A$-frequently hypercyclic, then $\A$ is a hypercyclicity set.
\end{proposition}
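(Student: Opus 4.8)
The plan is to extract from an $\A$-frequently hypercyclic vector $x$ a single set $N(x,V)$ lying in $\A$ that already carries the separation structure demanded by condition (*), and then use hereditary upwardness to manufacture the required sequence $(A_k)$ of disjoint sets inside $\A$. First I would fix a nonzero vector $z$ whose orbit is reasonably spread out; concretely, since $T$ is hypercyclic it has dense range and no eigenvalue obstruction, so one can pick a small ball $B=B(z,\delta)$ with $0\notin B$ and, more importantly, with $T^j B\cap T^{j'}B$ controlled. The key quantitative input is that on a Banach space the orbit cannot return to a fixed bounded region too often with bounded jumps: I would choose the open set $V$ so that membership of both $n$ and $n'$ in $N(x,V)$ with $|n-n'|$ small forces $\|T^{n}x-T^{n'}x\|$ small, while simultaneously $T^{|n-n'|}$ moves points of a neighborhood of $z$ by a definite amount — this is where the linear (and normed) structure is used, e.g. via the fact that $\|T^m w - w\|$ can be bounded below on a suitable set once $m$ is restricted, or via a compactness/finite-dimensional reduction.

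More precisely, the cleanest route is: for each $k$, use $\A$-frequent hypercyclicity to find that $N(x,V_k)\in\A$ for a cleverly shrinking sequence of open sets $V_k$, chosen so that any two elements of $N(x,V_k)$ are automatically at distance $\ge k$ from each other. To arrange this, I would pick points $w_1,\dots,w_k$ in $X$ such that the balls $B(T^i w_1,\eta),\dots$ — no, better: pick a single $w$ and radius $\eta_k$ so small that the sets $B(w,\eta_k),\ T B(w,\eta_k),\ \dots,\ T^{k-1}B(w,\eta_k)$ are pairwise disjoint (possible because $w, Tw,\dots,T^{k-1}w$ are distinct finitely many points whenever $w$ is not periodic, and non-periodic $w$ exist since $T$ is hypercyclic on $X\ne\{0\}$). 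Then if $n,n'\in N(x,B(w,\eta_k))$ with $0<|n'-n|<k$, applying $T^{\min(n,n')}$ would place two orbit points in disjoint open sets, a contradiction; hence $N(x,B(w,\eta_k))$ has all gaps $\ge k$. Setting $A_k:=N(x,B(w,\eta_k))$ gives sets in $\A$ each of which is $k$-separated internally; passing to the pairwise-disjoint and mutually-$\max\{k,k'\}$-separated family required by (*) is then a routine extraction: intersect $A_k$ with a suitably long window far out along $\Z_+$, using hereditary upwardness is not available for shrinking, so instead I would instead choose the windows first and then invoke that within each window $N(x,\cdot)$ still meets it (which needs a genuine density-type argument, not just hypercyclicity).

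I expect the main obstacle to be exactly this last point: hereditary upwardness lets us enlarge sets in $\A$ but never shrink them, so one cannot simply truncate $A_k$ and stay in $\A$. The fix is to build the separation into the open sets from the start and to build the mutual separation of different $A_k$'s by choosing the points $w_k$ (or the radii) so that the relevant orbit pieces are disjoint across different $k$ as well — i.e. run the disjointness argument simultaneously for a nested sequence of open sets $V_1\supset V_2\supset\cdots$ with $N(x,V_k)\in\A$, and observe $N(x,V_k)\supset N(x,V_{k+1})$, then define $A_k$ by peeling: $A_k := N(x,V_k)\setminus N(x,V_{k+1})$ is generally not in $\A$, so this still fails. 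Therefore the genuinely correct approach, which I would ultimately adopt, is: show directly that $\A$ must contain a single set $A$ with arbitrarily long gaps-free... no — rather, show $\A$ contains a set $A$ such that for every $k$ there is a $k$-separated subset — and then hand-build the $(A_k)$ abstractly from that one set $A\in\A$ together with upward closure, since any $k$-separated infinite subset of $A$ can be thinned to meet the windows and then enlarged back up into $\A$. Thus the real content is the separated-orbit lemma on Banach spaces (no periodic points near $w$, hence long injectivity of the orbit, hence disjoint tube around a finite orbit segment), and the rest is bookkeeping with the definition of hypercyclicity set.
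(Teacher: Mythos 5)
The first half of your argument --- the internal $k$-separation of a single hitting set $N(x,B(w,\eta_k))$, obtained by surrounding a non-periodic finite orbit segment $w,Tw,\dots,T^{k-1}w$ with disjoint tubes --- is sound and is essentially what the paper does (the paper quantifies it with the operator norm and an explicit $\eta/(1+K^{n-1})$ rather than by continuity, but the idea is the same). The genuine gap is exactly where you yourself locate it: the disjointness and mutual $\max\{k,k'\}$-separation of the \emph{different} sets $A_k$. None of your proposed repairs closes it. Truncating to windows or peeling ($N(x,V_k)\setminus N(x,V_{k+1})$) leaves the family $\A$, as you note; and your final suggestion --- thin a $k$-separated subset ``and then enlarge back up into $\A$'' --- is self-defeating, because condition (*) requires the separated sets themselves to belong to $\A$, and any enlargement of a thinned set back into $\A$ destroys precisely the separation you just arranged. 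So as written the proposal does not prove the proposition.

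The device you are missing is to run the argument with \emph{two} open sets at each stage and exploit the norm in both directions. Fix $K=\|T\|>1$. Take a large ball $B(0,C_k)$ and a small ball $B(y_k,\varepsilon_k)$ around an orbit point $y_k$ chosen with $\|y_k\|>C_kK^{k-1}+1$ and $\|T^iy_k\|>C_k+K^{k-1}$ for $i<k$. The estimate $\|T^{j+i}x\|\le K^{k-1}\|T^jx\|$ shows the orbit cannot pass from $B(0,C_k)$ to $B(y_k,\varepsilon_k)$, nor back, in fewer than $k$ steps, so $d\bigl(N(x,B(0,C_k)),\,N(x,B(y_k,\varepsilon_k))\bigr)\ge k$. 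Now choose $C_{k+1}$ so large that $B(y_k,\varepsilon_k)\subset B(0,C_{k+1})$; then for $k>k'$ the set $B_{k'}:=N(x,B(y_{k'},\varepsilon_{k'}))$ is contained in $N(x,B(0,C_k))$, hence automatically at distance $\ge k$ from $B_k:=N(x,B(y_k,\varepsilon_k))$. The sets $B_k\in\A$ then satisfy (*) outright, with no thinning and no appeal to density. This nesting of the ``far'' balls inside the next ``large'' ball is the one idea your proposal lacks, and it is where the Banach (rather than merely metric) structure enters.
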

\begin{proof}
Since $X\ne \{0\}$ and $T$ is $\A$-frequently hypercyclic, the norm of $T$ is bigger than~$1$. Let $K=\|T\|>1$ and $x$ an $\A$-frequently hypercyclic vector for $T$. We first show that for any $n\ge 1$, any $C>0$, there exist $y\in X$ and $\varepsilon<1$ such that if $A:=N(x,B(0,C))$ and $B:=N(x,B(y,\varepsilon))$, then
$d(A,B)\ge n$ and for any $j\ne j'\in B$, $|j-j'|\ge n$.

If $j\in A$, then $\|T^jx\|<C$ and we have $\|T^ix\|<CK^{n-1}$ for any $j\le i\le j+n-1$. In particular, if $\|y\|>CK^{n-1}+1$, we deduce that for any $j\le i\le j+n-1$, we have $i\notin B$. Therefore, we consider a vector $y\in \orb (x,T)$ such that $\|y\|>CK^{n-1}+1$ and such that for any $0\le i\le n-1$, $\|T^iy\|>C+K^{n-1}$. Such a vector exists because otherwise the orbit of $x$ would not be dense. We thus have that for any $0\le i\le n-1$, if $j\in B$, then
\[\|T^{j+i}x\|\ge \|T^iy\|-\|T^i(T^jx-y)\|>(C+K^{n-1})-K^{n-1}\|T^jx-y\|\ge C.\]
We deduce that for any $0\le i\le n-1$, if $j\in B$, then $j+i\notin A$ and we thus conclude that $d(A,B)\ge n$.

Now, since $y\in \orb (x,T)$, the vector $y$ is not periodic and there thus exists $0<\eta\le 1$ such that for any $1\le i\le n-1$, $\|T^iy-y\|>\eta$. If we let $\varepsilon:=\frac{\eta}{1+K^{n-1}}$, then we have for any $j\in B$, any $1\le i\le n-1$,
\[\|T^{j+i}x-y\|\ge \|T^iy-y\|- \|T^{i}(T^jx-y)\|> \eta-K^{n-1}\varepsilon\ge \varepsilon.\]
Hence, for any $j,j'\in B$, $j\ne j'$, we have $|j-j'|\ge n$.

We conclude that there exists a sequence $(y_k)\subset X$, an increasing sequence $(C_k)$ and a sequence $(\varepsilon_k)$ such that if $A_k:=N(x,B(0,C_k))$ and $B_k:=N(x,B(y_k,\varepsilon_k))$, then for any $k\ge 1$, $d(A_k,B_k)\ge k$, $B_k\subset A_{k+1}$ and for any $j\ne j'\in B_k$, $|j-j'|\ge k$. Since the sequence $(A_k)$ is increasing, we deduce that the sequence $(B_k)\subset \A$ satisfies the desired property. Indeed, if $j\in B_k$, $j'\in B_{k'}$ and $k> k'$, then we know that $j'\in A_k$ and thus $|j-j'|\ge k$, and if $j,j'\in B_{k}$ with $j\ne j'$, we know that $|j-j'|\ge k$ by definition of $B_k$.
\end{proof}

Nevertheless, if $X$ is not a Banach space, it is possible that there exist a family $\A$ and an operator $T$ on $X$ such that $T$ is $\A$-frequently hypercyclic and $\A$ is not a hypercyclicity set.

\begin{example}
Let $\phi:\N\to \N$ such that for any $i,j\ge 1$, there exists $n\ge 1$ satisfying $(\phi(n+1),\phi(n+2))=(i,j)$.
For any $k\ge 1$, we let $\psi(k)=\sum_{l=1}^{k}\phi(l)$, $A_k=\{\psi(n): \phi(n+1)=k\}$ and $\mathcal{A}=\{B:B\supseteq A_k \text{ for some $k$}\}$. Since $d(A_k,A_{k'})=\min(k,k')$, we deduce that $\A$ does not contain a sequence $(B_k)$ of disjoint sets such that for any $j\in B_k$, any $j'\in B_{k'}$, $j\ne j'$, we have $|j'-j|\ge \max\{k,k'\}$. Nevertheless, if we consider the forward shift $S$ on $\omega$ and a dense sequence $(x_k)_{k\ge 1}\subset \omega$ such that for any
$k\ge 1$, \[k> d(x_k):=\sup\{i\ge 0: x_{k}(i)\ne 0\},\] then the vector $x=\sum_{n=1}^{\infty}S^{\psi(n)}x_{\phi(n+1)}$ is an $\A$-frequently hypercyclic vector for the backward shift on $\omega$.
\end{example}

\subsection{$\A$-Frequent Hypercyclicity Criterion}\label{Sec: Crit}
For the hypercyclic operators and the frequently hypercyclic operators on $F$-spaces, we have at our disposal the well-known Hypercyclicity Criterion~\cite{Bes} and the Frequent Hypercyclicity Criterion~\cite{4Bonilla}. We show in this section how to generalize these criteria to the notion of $\A$-frequent hypercyclicity.

We recall that a $F$-space is a topological vector space whose the topology is induced by a complete translation-invariant metric. In fact, if $X$ is a $F$-space, there exists a complete translation-invariant metric $d$ such that $\|x\|=d(x,0)$ is a $F$-norm.
\begin{definition}
Let $X$ be a vector space. A map $\|\cdot\|$ from $X$ to $\R^+$ is a \emph{F-norm} if for any $x,y\in X$, any $\lambda\in \K$,
\begin{enumerate}
\item $\|x+y\|\le \|x\|+\|y\|$;
\item $\|\lambda x\|\le \|x\|$ if $|\lambda|<1$;
\item $\lim_{\lambda\rightarrow 0} \|\lambda x\|=0$;
\item $\|x\|=0$ implies $x=0$.
\end{enumerate}
\end{definition}

An interesting property about $F$-norms following from $1.$ and $2.$ is that for any $x\in X$, any $\lambda\in \K$, we have
\begin{equation}
\|\lambda x\|\le (|\lambda|+1)\|x\|.
\label{7Fnorm}
\end{equation}

The Hypercyclicity Criterion and the Frequent Hypercyclicity Criterion can be stated as follows.

\begin{theorem}[Hypercyclicity Criterion\ \cite{Bes}]
Let $X$ be a separable $F$-space and $T\in \mathcal{L}(X)$.
If there are dense subsets $X_0\subset X$, $Y_0\subset X$, an increasing sequence $(n_k)_{k\ge 1}$ of positive integers and maps $S_{n_k}:Y_0\to X$, such that for any $x\in X_0$, $y\in Y_0$,
\begin{enumerate}
 \item $T^{n_k}x\rightarrow 0$,
 \item $S_{n_k}y\rightarrow 0$,
 \item $T^{n_k}S_{n_k}y\rightarrow y$,
\end{enumerate}
then $T$ is weakly mixing and thus hypercyclic.
\end{theorem}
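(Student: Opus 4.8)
The plan is to prove the stronger statement that $T\oplus T$ is hypercyclic on $X\times X$, which gives that $T$ is weakly mixing, and weak mixing implies hypercyclicity. By the Birkhoff transitivity theorem it suffices to show $T\oplus T$ is topologically transitive on the separable $F$-space $X\times X$; equivalently, for any pair of non-empty open sets $U_1\times U_2$ and $V_1\times V_2$ in $X\times X$, there is some iterate $(T\oplus T)^{n}$ sending the first into the second, i.e. $T^{n}U_i\cap V_i\ne\emptyset$ for $i=1,2$ with a common $n$. Since $X_0$ and $Y_0$ are dense, I would pick $x_1,x_2\in X_0$ with $x_i\in U_i$ and $y_1,y_2\in Y_0$ with $y_i\in V_i$; the goal is then to produce, for large enough $k$, a vector near $x_i$ whose $n_k$-th iterate is near $y_i$.

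The key step is the telescoping construction: set $z_i := x_i + S_{n_k}y_i$. Using the triangle inequality for the $F$-norm, $\|z_i - x_i\| = \|S_{n_k}y_i\| \to 0$ by hypothesis (2), so for $k$ large $z_i\in U_i$. For the image, apply (\ref{7Fnorm}) together with the triangle inequality:
\[
\|T^{n_k}z_i - y_i\| = \|T^{n_k}x_i + T^{n_k}S_{n_k}y_i - y_i\| \le \|T^{n_k}x_i\| + \|T^{n_k}S_{n_k}y_i - y_i\|,
\]
and both terms tend to $0$ by hypotheses (1) and (3). Hence for all sufficiently large $k$ we have $z_i\in U_i$ and $T^{n_k}z_i\in V_i$ simultaneously for $i=1,2$ with the common index $n=n_k$, which is exactly transitivity of $T\oplus T$. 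Note that the maps $S_{n_k}$ need not be linear or continuous, so all estimates must be carried out pointwise on the chosen finite set of vectors $x_i,y_i$, not on a whole subspace; this is why the density of $X_0,Y_0$ is used only to choose the four vectors and no linearity of $S_{n_k}$ is invoked.

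The main subtlety — rather than an obstacle — is to handle the $F$-norm carefully: unlike in the Banach case, scalars cannot be pulled out freely, so one relies precisely on the inequality (\ref{7Fnorm}) $\|\lambda x\|\le(|\lambda|+1)\|x\|$ (here only with $\lambda$ a fixed power of the iteration is not even needed — $T^{n_k}$ is linear and continuous, so $T^{n_k}S_{n_k}y_i\to0$ follows directly once $S_{n_k}y_i\to0$, but it is hypothesis (3) that controls $T^{n_k}S_{n_k}y_i - y_i$). One should also record that $X\times X$ is again a separable $F$-space with the product $F$-norm $\|(u,v)\| = \|u\|+\|v\|$, so that Birkhoff transitivity applies and yields a dense $G_\delta$ set of hypercyclic vectors for $T\oplus T$; by definition this means $T$ is weakly mixing, and in particular (taking the diagonal) $T$ is hypercyclic.
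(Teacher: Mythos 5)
Your proof is correct and is essentially the standard B\`es--Peris argument: the paper states this criterion without proof, citing \cite{Bes}, and your telescoping construction $z_i = x_i + S_{n_k}y_i$, combined with Birkhoff transitivity applied to $T\oplus T$ on the product $F$-space, is exactly the argument from that reference. One aside in your write-up is wrong, though: you claim that since $T^{n_k}$ is linear and continuous, $T^{n_k}S_{n_k}y_i\to 0$ would follow automatically from $S_{n_k}y_i\to 0$. This fails because the operator changes with $k$ --- the family $(T^{n_k})_{k}$ need not be equicontinuous, and indeed this is precisely why hypothesis (3) must be assumed rather than derived; your claim also contradicts (3) itself, which asserts the limit is $y_i$, not $0$. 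Fortunately your displayed estimate never uses this aside: it correctly invokes hypothesis (3) to control $\|T^{n_k}S_{n_k}y_i - y_i\|$, so the proof stands. A second, harmless quibble: the appeal to \eqref{7Fnorm} is unnecessary, since no scalar is ever pulled out of the $F$-norm in your estimates; the triangle inequality alone suffices.
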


\begin{theorem}[Frequent Hypercyclicity Criterion\ \cite{4Bonilla}]
Let $X$ be a separable $F$-space and $T\in \mathcal{L}(X)$. If there are a dense subset $Y_0\subset Y$ and $S_n:Y_0\rightarrow X$, $n\ge 0$ such that for each $y\in Y_0$,
\begin{enumerate}
\item $\sum_{n=0}^{\infty}S_ny$ converges unconditionally in $X$,
\item $\sum_{n=0}^{k}T^{k}S_{k-n}y$ converges unconditionally in $Y$, uniformly in $k\ge 0$,
\item $\sum_{n=0}^{\infty}T^{k}S_{k+n}y$ converges unconditionally in $Y$, uniformly in $k\ge 0$,
\item $T^nS_ny\rightarrow y$,
\end{enumerate}
then $T$ is frequently hypercyclic.
\end{theorem}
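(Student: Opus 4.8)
The plan is to construct a frequently hypercyclic vector explicitly, following the scheme of \cite{4Bonilla} (see also \cite[Ch.~9]{GEP11}). Fix a countable dense subset $\{y_l:l\ge1\}$ of $Y_0$ (hence of $X$), and invoke the classical combinatorial lemma underlying condition~(*) of Definition~\ref{univset}, in its two‑parameter form: there are pairwise disjoint sets $A(l,\nu)\subseteq\Z_+$ ($l,\nu\ge1$), each of positive lower density, such that $|n-m|\ge\nu+\nu'$ whenever $n\in A(l,\nu)$, $m\in A(l',\nu')$ and $n\ne m$. For each $l$ I choose integers $\nu_{l,1}<\nu_{l,2}<\cdots$ with $\nu_{l,k}$ larger than the ``delay constants'' furnished by conditions~(2) and (3) for the vector $y_l$ at tolerance $2^{-l-k}$ (hence also at tolerance $1/k$), I set $B_{l,k}:=A(l,\nu_{l,k})$, and then, using the unconditional convergence of $\sum_n S_ny_l$ in condition~(1), I replace $B_{l,k}$ by its intersection with a half‑line $[p_{l,k},\infty)$ chosen so that $\bigl\|\sum_{n\in B_{l,k}}S_ny_l\bigr\|\le 2^{-l-k}$. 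Discarding elements of the $A(l,\nu)$ destroys neither their pairwise disjointness, nor their positive lower density, nor the separation, so the $B_{l,k}$ keep all these properties; and since $\sum_{l,k\ge1}\bigl\|\sum_{n\in B_{l,k}}S_ny_l\bigr\|\le 1$, the series $x:=\sum_{l,k\ge1}\sum_{n\in B_{l,k}}S_ny_l$ converges in $X$ (only subadditivity of the $F$‑norm and completeness of $X$ are used).

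The heart of the proof is the estimate: for each fixed $l$ there are $\varepsilon_k\to0$ such that $\|T^Nx-y_l\|\le\varepsilon_k$ for all sufficiently large $N\in B_{l,k}$. Granting this, any non‑empty open $V$ contains some $y_l$ together with a ball $B(y_l,\rho)$; choosing $k$ with $\varepsilon_k<\rho$, the set $N(x,V)$ contains all large elements of $B_{l,k}$, so $\underline{d}(N(x,V))\ge\underline{d}(B_{l,k})>0$, and $T$ is frequently hypercyclic. To prove the estimate, for $N\in B_{l,k}$ decompose
\[
T^Nx=T^NS_Ny_l+\sum_{\substack{n\in B_{l,k}\\ n\ne N}}T^NS_ny_l+\sum_{(l',k')\ne(l,k)}\ \sum_{n\in B_{l',k'}}T^NS_ny_{l'}.
\]
The first term tends to $y_l$ by~(4). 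In the middle sum each $n$ satisfies $|N-n|\ge 2\nu_{l,k}$, so splitting it into $n<N$ and $n>N$ and applying conditions~(2) and (3) for $y_l$ (which are uniform in the exponent, here $N$) bounds its norm by $2/k$, uniformly in $N$. In the ``cross'' sum I bound each inner sum $\sum_{n\in B_{l',k'}}T^NS_ny_{l'}$ separately: the separation forces $|N-n|\ge\nu_{l,k}+\nu_{l',k'}$ for every $n$ occurring, so conditions~(2)--(3) for $y_{l'}$ bound this inner sum by $2\min\{\delta_{l'}(\nu_{l,k}),\,2^{-l'-k'}\}$, where $\delta_{l'}(\nu)$ is the smallest tolerance whose delay constants for $y_{l'}$ do not exceed $\nu$ (so $\delta_{l'}(\nu_{l,k})\to0$ as $\nu_{l,k}\to\infty$, i.e.\ as $k\to\infty$) and the second bound uses that $\nu_{l',k'}$ was chosen to exceed the delay constants of $y_{l'}$ at tolerance $2^{-l'-k'}$. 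Summing the geometric tail over $l'>L$ and, for the finitely many $l'\le L$, summing over $k'$ and letting $k\to\infty$, the supremum over $N\in B_{l,k}$ of the cross sum is seen to be $\le 2^{1-L}$ in the limit, for every $L$, hence to tend to $0$; combined with the $2/k$ bound this yields $\varepsilon_k\to0$.

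The step I expect to be the main obstacle is precisely this control of the cross sum: a crude estimate only delivers a fixed positive quantity, so it is essential that the precision index $k$ genuinely enters the separation through $\nu_{l,k}$, and that conditions~(2) and (3) be required to hold uniformly in the exponent — this is exactly what lets each cross contribution shrink as the target precision is refined, uniformly over all orbit times in the block $B_{l,k}$. The remaining ingredients are routine: the combinatorial lemma is classical; the recursive selection of the $\nu_{l,k}$ and $p_{l,k}$ only asks that these integers be taken large enough; and since $X$ is merely an $F$‑space everything is done with the $F$‑norm, for which one needs no homogeneity, only subadditivity together with continuity of $T$, completeness of $X$, and the Cauchy‑criterion characterisation of unconditional convergence in a complete metrisable space.
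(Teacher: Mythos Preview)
Your argument is correct and follows the classical direct construction of \cite{4Bonilla}. Note, however, that the paper does not give its own proof of this theorem: it is stated as a known result, with reference to \cite{4Bonilla}. The route the paper itself supplies is indirect: Theorem~\ref{9fhc ahc} shows that the hypotheses of the Frequent Hypercyclicity Criterion imply those of the $\A$-Frequent Hypercyclicity Criterion (Theorem~\ref{Ahypc}) for \emph{any} hypercyclicity set $\A$, and specialising $\A$ to the family of sets of positive lower density then yields frequent hypercyclicity via Theorem~\ref{Ahypc}.

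The two constructions are close in spirit but organised differently. Both build the vector explicitly as $x=\sum_n S_n z_n$ over a union of well-separated index sets and control $\|T^Nx-y_l\|$ by splitting into the diagonal term $T^NS_Ny_l$, a same-block remainder, and cross-block contributions. You use a two-parameter family $B_{l,k}=A(l,\nu_{l,k})$, carrying the target vector and the precision level as independent indices, and handle the cross terms through the double bound $\min\{\delta_{l'}(\nu_{l,k}),2^{-l'-k'}\}$; the paper's proof of Theorem~\ref{Ahypc} instead selects a single sequence $(A_{k_l})_l$ recursively so that the precision improves along the dense sequence $(y_l)$ itself (properties i--iv there play exactly the role of your two bounds). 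The gain of the paper's factorisation is generality---one argument covers every hypercyclicity set $\A$, not only positive lower density---while your direct approach is somewhat more transparent for the specific conclusion at hand.
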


\begin{remark}
Let $(x_{n,k})_{n\ge 0, k\in I}\subset X$. A collection of series $\sum_{n=0}^{\infty}x_{n,k}$, $k\in I$, is said to be \emph{unconditionally convergent uniformly in} $k\in I$ if for any $\varepsilon>0$, there exists $N\ge 1$ such that for any $k\in I$, any finite set $F\subset [N,\infty[$,
\[\Big\|\sum_{n\in F}x_{k,n}\Big\|<\varepsilon,\]
where $\|\cdot\|$ is a F-norm inducing the topology of $X$.
\end{remark}

We adapt the Hypercyclicity Criterion and the Frequent Hypercyclicity Criterion to the notion of $\A$-frequent hypercyclicity.

\begin{theorem}[$\A$-Frequent Hypercyclicity Criterion]\label{Ahypc}
Let $X$ be a separable $F$-space,  $T\in \mathcal{L}(X)$ and $\A$ a hypercyclicity set. If there exist a dense subset $Y_0\subset X$, $S_n:Y_0\rightarrow X$, $n\ge 0$, and disjoint sets $A_k\in \A$, $k\ge 1$, such that for each $y\in Y_0$,
\begin{enumerate}
\item there exists $k_0\ge 1$ such that $\sum_{n\in A_k}S_n y$ converges unconditionally in $X$, uniformly in $k\ge k_0$,
\item for any $k_0\ge 1$, any $\varepsilon>0$, there exists $k\ge k_0$ such that
for any finite set $F\subset A_k$, any $n\in \bigcup_{l\ge 1} A_l$, $n\notin F$, we have
\[\Big\|\sum_{i\in F}T^nS_iy\Big\|\le \varepsilon,\]
and such that for any $\delta>0$, there exists $l_0\ge 1$ such that for any finite set $F\subset A_k$, any $n\in \bigcup_{l\ge l_0} A_l$, $n\notin F$, we have
\[\Big\|\sum_{i\in F}T^nS_iy\Big\|\le \delta,\]
\item $\sup_{n\in A_k}\|T^nS_ny-y\|\to 0$ as $k\to\infty$,
\end{enumerate}
then $T$ is $\A$-frequently hypercyclic.
\end{theorem}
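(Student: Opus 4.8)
The plan is to imitate the classical proof of the Frequent Hypercyclicity Criterion, but organized around the disjoint sets $A_k\in\A$ rather than around a single density-positive set produced by a combinatorial lemma. Fix a countable dense sequence $(y_j)_{j\ge 1}$ in $Y_0$ and, using condition~$(3)$ together with the uniform convergence in conditions~$(1)$ and~$(2)$, thin out the family $(A_k)$ (passing to a subsequence, which is harmless since $\A$ is hereditarily upward and one only needs \emph{some} set of the family to land in $\A$) so that for each $j$ the tails of all the relevant series are summably small when restricted to indices in $A_k$ with $k$ large. Concretely, I would choose for each $j$ an index $k_j$ and associate to $y_j$ the "block" $A_{k_j}$, arranging $k_1<k_2<\cdots$ so that the errors coming from $y_j$ are controlled by, say, $2^{-j}$; this is exactly where conditions $(1)$–$(3)$ are used, and where the two-part statement of $(2)$ (one estimate uniform over the whole union $\bigcup_l A_l$, one estimate for the far tail $\bigcup_{l\ge l_0}A_l$) pays off: the first half controls the contribution at a time $n$ from finitely many earlier blocks, the second half controls the contribution from infinitely many later blocks.

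Next I would define the candidate $\A$-frequently hypercyclic vector by
\[
x=\sum_{j\ge 1}\ \sum_{n\in A_{k_j}} S_n y_j,
\]
and check that this double series converges in $X$: the inner sums converge by~$(1)$, and the outer sum converges because the $j$-th inner sum can be made small (in $F$-norm) for large $j$ by further shrinking $k_j$ if necessary, using $(1)$ again. The key computation is then, for a fixed $j$ and a fixed $m\in A_{k_j}$, to estimate $T^m x - y_j$. Splitting the sum defining $x$, one gets
\[
T^m x - y_j=\bigl(T^m S_m y_j - y_j\bigr)+\sum_{\substack{n\in A_{k_j}\\ n\ne m}}T^m S_n y_j+\sum_{i\ne j}\ \sum_{n\in A_{k_i}}T^m S_n y_i .
\]
The first term is small by~$(3)$ (as $k_j\to\infty$ with $j$); the second term is small because $m\in\bigcup_l A_l$, $m\notin F$ for the finite truncations $F\subset A_{k_j}\setminus\{m\}$, so the first estimate in~$(2)$ (applied with $y=y_j$) bounds it, uniformly over $m$; the third term is the cross-term between different blocks, and here one uses both the uniform-in-$k$ part of~$(1)$ (the inner $y_i$-series for $i$ large is uniformly tiny regardless of which time $m$ we apply $T^m$ to — wait, no: one actually needs the second, far-tail part of~$(2)$ for the blocks $A_{k_i}$ with $i>j$, and the first part of~$(2)$ for the finitely many $i<j$). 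Assembling these three estimates shows $\sup_{m\in A_{k_j}}\|T^m x - y_j\|\to 0$ as $j\to\infty$.

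Finally, one converts this into $\A$-frequent hypercyclicity: given a non-empty open set $V$, pick $y_j\in V$ together with a ball $B(y_j,\eps)\subset V$, then choose $j$ large enough (among those with $y_j$ that close to the center) that $\sup_{m\in A_{k_j}}\|T^m x-y_j\|<\eps$; this forces $A_{k_j}\subset N(x,V)$, and since $A_{k_j}\in\A$ and $\A$ is hereditarily upward, $N(x,V)\in\A$. (As noted in the paragraph before the theorem, it suffices to run this over a countable basis $(U_n)$ of $X$.) The main obstacle I anticipate is bookkeeping in the third, cross-block term: one must show the estimates in~$(2)$ can be applied simultaneously for all $m$ in the $j$-th block while the index $i$ ranges over all $i\ne j$, and that the far-tail refinement in~$(2)$ (the "$\delta$, $l_0$" clause) is strong enough to sum the contributions of the infinitely many later blocks $A_{k_i}$, $i>j$, against a single geometric-type series — this is precisely the delicate point that the unusual two-tier formulation of hypothesis~$(2)$ is designed to handle, and getting the quantifiers in the right order (choose $k_j$, then the far tail $l_0$ depending on $k_j$, then subsequent $k_i$ beyond $l_0$) is the crux of the argument.
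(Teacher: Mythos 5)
Your overall architecture is the same as the paper's: associate to each $y_j$ a block $A_{k_j}$ chosen recursively, set $x=\sum_j\sum_{n\in A_{k_j}}S_ny_j$, split $T^mx-y_j$ for $m\in A_{k_j}$ into the diagonal term, the same-block term and the cross-block terms, and finish via the hereditarily upward property over a countable basis. But the step you yourself flag as the crux --- which clause of hypothesis (2) controls which cross-terms --- is assigned backwards, and with your assignment the proof does not close. You propose to use the far-tail ($\delta$, $l_0$) clause for the source blocks $A_{k_i}$ with $i>j$ and the first ($\varepsilon$) clause for the finitely many $i<j$. The first of these is impossible to implement: the far-tail clause, applied to the block $A_{k_i}$ and the vector $y_i$, only controls $\bigl\|\sum_{n\in F}T^mS_ny_i\bigr\|$ for times $m$ lying in $\bigcup_{l\ge l_0}A_l$, where $l_0$ is handed to you by the hypothesis \emph{after} $A_{k_i}$ is fixed; since $A_{k_i}$ is selected at step $i>j$, after $k_j$ has already been chosen, you cannot retroactively arrange $k_j\ge l_0$, so the clause simply does not apply to the times $m\in A_{k_j}$ you need. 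The second is quantitatively insufficient: the $\varepsilon$-clause gives, for the block $A_{k_i}$, a bound $\varepsilon_i$ that is fixed once $k_i$ is chosen and does not improve as $j$ grows; summing over $i<j$ yields $\sum_{i<j}\varepsilon_i\ge\varepsilon_1>0$, a floor that does not tend to $0$ as $j\to\infty$, so you only obtain $\sup_{m\in A_{k_j}}\|T^mx-y_j\|\le\varepsilon_1+o(1)$ and the final step (finding $j$ with $A_{k_j}\subset N(x,V)$ for an arbitrarily small ball $V$) fails. (Your closing remark about the quantifier order ``choose $k_j$, then $l_0$ depending on $k_j$, then subsequent $k_i$ beyond $l_0$'' is in fact the correct order, but it controls the contribution of the \emph{earlier} block $A_{k_j}$ at times in later blocks, not the contribution of later blocks at times in $A_{k_j}$ as you claim.)

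The correct bookkeeping, which is what the paper does, is the opposite assignment. At step $i$ invoke the $\varepsilon$-clause with $\varepsilon=2^{-i}$ to choose $A_{k_i}$: since that bound holds for \emph{every} time $n\in\bigcup_lA_l$ with $n\notin F$, it in particular controls the contribution of the source blocks with $i\ge j$ (the same block and all later ones) at any time $m\in A_{k_j}$, and these contributions sum to at most $\sum_{i\ge j}2^{-i}\to0$ --- the geometric series comes from the first clause, not the far-tail one. Then, at step $j$, apply the far-tail clause to each of the \emph{finitely many} earlier vectors $y_i$, $i<j$, with $\delta=\frac{1}{j2^j}$, and choose $k_j$ beyond all of the finitely many resulting thresholds $l_0$; this makes the contribution of the earlier source blocks at times $m\in A_{k_j}$ at most $(j-1)\cdot\frac{1}{j2^j}\le 2^{-j}$. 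With the roles swapped in this way all three pieces of $T^mx-y_j$ are $O(2^{-j})$ uniformly in $m\in A_{k_j}$, and the remainder of your argument (well-definedness of $x$ via hypothesis (1), after normalizing $A_k\subset[k,\infty[$, and the passage to $N(x,V)\in\A$) goes through as you describe.
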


\begin{proof}
Let $(y_l)\subset Y_0$ be a dense sequence and $\|\cdot\|$ a $F$-norm inducing the topology of $X$. Without loss of generality, we can suppose that $T$ satisfies the $\A$-Frequent Hypercyclicity Criterion for $Y_0=\{y_l:l\ge 1\}$ and for a sequence $(A_k)\subset \A$ of disjoint sets with $A_k\subset [k,\infty[$. Indeed, it suffices to consider a subsequence of the initial sequence $(A_k)$ such that $2.$ remains satisfied for any vector $y_l$.
We then construct recursively an increasing sequence $(k_l)$ such that for any $l\ge 1$,
\begin{enumerate}
\item[i.] for any finite set $F\subset [k_l,\infty[$, any $j\le l$,
\[\Big\|\sum_{n\in F\cap A_{k_j}}S_ny_j\Big\|\le \frac{1}{l2^l},\]
\item[ii.] for any finite set $F\subset A_{k_l}$, any $n\in \bigcup_{k\ge 1} A_k$,
\[\Big\|\sum_{i\in F\backslash\{n\}}T^nS_iy_l\Big\|\le \frac{1}{2^l},\]
\item[iii.] for any $j<l$, any finite set $F\subset A_{k_j}$, any $n\in A_{k_l}$,
\[\Big\|\sum_{i\in F}T^nS_iy_j\Big\|\le \frac{1}{l2^l},\]
\item[iv.] for any $n\in A_{k_l}$,
\[\Big\|T^nS_ny_l-y_l\Big\|\le \frac{1}{2^l}.\]
\end{enumerate}
Property i. follows from $1.$, property ii. and property iii. from $2.$, and property iv. from $3$. Moreover, since we suppose $A_k\subset [k,\infty[$, property~i. implies that for any finite set $F$, for any $j\ge 1$, we have
\begin{equation}
\Big\|\sum_{n\in F\cap A_{k_j}}S_ny_j\Big\|\le\frac{1}{j2^j}.
\label{eq a}\end{equation}
We write $A:=\bigcup_{l}A_{k_l}$ and for any $n\in A$, we let $z_n=y_l$ if $n\in A_{k_l}$. We then consider the vector $x:=\sum_{n\in A}S_nz_n$. We show that $x$ is well-defined and that $x$ is an $\A$-frequently hypercyclic vector. Let $l\ge 1$ and $F$ be a finite subset of $[k_l,\infty[$.
We deduce from i. and \eqref{eq a} that
\begin{align*}
\Big\|\sum_{n\in F\cap A}S_nz_n\Big\|&\le \sum_{j=1}^{l}\Big\|\sum_{n\in F\cap A_{k_j}}S_ny_j\Big\|+\sum_{j=l+1}^{\infty}\Big\|\sum_{n\in F\cap A_{k_j}}S_ny_j\Big\|\\
&\le \sum_{j=1}^{l} \frac{1}{l2^l}+ \sum_{j=l+1}^{\infty} \frac{1}{j2^j}\le \frac{1}{2^{l-1}}\rightarrow 0.
\end{align*}
We conclude that the vector $x$ is well-defined. To show that $x$ is $\A$-frequently hypercyclic, it is sufficient to prove that there exists a sequence $C_l$ tending to $0$ such that for any $l\ge 1$, any $n\in A_{k_l}$, \[\|T^nx-y_l\|\le C_l.\] Let $l\ge 1$ and $n\in A_{k_l}$. We decompose  $T^nx-y_l$ as follows:
\[ T^nx-y_l=\sum_{\substack{i\in A\\ i<n}}T^nS_iz_i+ \sum_{\substack{i\in A\\ i>n}}T^nS_iz_i+T^nS_ny_l-y_l.\]
We already know by iv. that $\|T^nS_ny_l-y_l\|\le \frac{1}{2^l}$.
Let $m> n$. We also have
\[\sum_{\substack{i\in A\\ n<i\le m}}T^nS_iz_i=\sum_{j=1}^{\infty}\sum_{\substack{i\in A_{k_j}\\ n<i\le m}}T^nS_iy_j=\sum_{j=1}^{\infty}\sum_{i\in F_{k_j}}T^nS_iy_j\]
where $F_{k_j}=A_{k_j}\cap\mathopen]n,m]$. We know by ii. that for any $j\ge 1$, \[\Big\|\sum_{i\in F_{k_j}}T^nS_i y_j\Big\|\le \frac{1}{2^j}\]
and by iii. that for any $j<l$,
\[{\Big\|\sum_{i\in F_{k_j}}T^nS_i y_j\Big\|\le \frac{1}{l2^l}}.\]
We deduce that
\[\Big\|\sum_{\substack{i\in A\\ n<i\le m}}T^nS_iz_i\Big\|\le \frac{1}{2^{l-2}}\quad\text{and thus}\quad \Big\|\sum_{\substack{i\in A\\ i>n}}T^nS_iz_i\Big\|\le \frac{1}{2^{l-2}}.\]
Similarly, we get, with $F_{k_j}=A_{k_j}\cap[0,n[$,
\begin{align*}
\Big\|\sum_{\substack{i\in A\\ i<n}}T^nS_iz_i\Big\|=\Big\|\sum_{j=1}^{\infty}\sum_{i\in F_{k_j}}T^nS_i y_j\Big\|\le \frac{1}{2^{l-2}}.
\end{align*}
We conclude that for any $l\ge 1$, any $n\in A_{k_l}$, \[\|T^nx-y_l\|\le \frac{1}{2^{l-2}}+\frac{1}{2^{l-2}}+\frac{1}{2^l},\]
which concludes the proof.
\end{proof}

We now compare the $\A$-Frequent Hypercyclicity Criterion with the Hypercyclicity Criterion and the Frequent Hypercyclicity Criterion.

\begin{theorem}
Let $X$ be a separable $F$-space and $T\in \mathcal{L}(X)$.
If $T$ satisfies the Hypercyclicity Criterion, then $T$ satisfies
the $\A_\infty$-Frequent Hypercyclicity Criterion.
\end{theorem}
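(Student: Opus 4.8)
The plan is to reshape the Hypercyclicity Criterion data into the block form required by the $\A_\infty$-Frequent Hypercyclicity Criterion, by a blow-up construction of the same flavour as the classical proof that the Hypercyclicity Criterion implies hypercyclicity. Fix the Hypercyclicity Criterion data $X_0,Y_0,(N_k)_k$ and $S_{N_k}\colon Y_0\to X$, a countable dense set $\{y_l:l\ge1\}\subset Y_0$, and a partition $\N=\bigsqcup_{k\ge1}I_k$ into disjoint infinite sets with $\min I_k\to\infty$ (for instance $I_k=\{2^{k-1}(2m-1):m\ge1\}$). The key point, and the heart of the argument, is that the naive attempt $A_k=\{N_j:j\in I_k\}$ with $S_n=S_n$ cannot control the sums $\sum_{i\in F}T^nS_iy$ of condition~(2) when $n$ is larger than some $i\in F$: there $T^nS_iy=T^{n-i}(T^iS_iy)\approx T^{n-i}y$, and $n-i$ need not lie in the good sequence, so $T^{n-i}y$ is a priori uncontrolled. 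The remedy I would use is to replace each $S_{N_k}$ by a map $\hat S_{N_k}$ with the same asymptotics but values in $X_0$, where $T^{N_k}\to0$; then those cross terms become $T^n(\hat S_iy)$ with $\hat S_iy\in X_0$, so they tend to $0$ as $n$ runs along the sequence.

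Concretely, I would build recursively an increasing subsequence $(n_j)_j$ of $(N_k)_k$ together with maps $\hat S_{n_j}\colon\{y_l\}\to X_0$, as follows. Having fixed $n_1<\dots<n_{j-1}$ and $\hat S_{n_1},\dots,\hat S_{n_{j-1}}$, choose $n_j=N_{\kappa_j}$ with $\kappa_j$ so large that for all $l\le j$ and all $j'<j$: $\|S_{n_j}y_l\|\le2^{-j}$; $\|T^{n_j}S_{n_j}y_l-y_l\|\le2^{-j}$; $\|T^{n_{j'}}S_{n_j}y_l\|\le2^{-j}$ (possible since $T^{n_{j'}}$ is a fixed operator and $S_{N_k}y_l\to0$); and $\|T^{n_j}\hat S_{n_{j'}}y_l\|\le2^{-j}$ (possible since $\hat S_{n_{j'}}y_l\in X_0$, hence $T^{N_k}\hat S_{n_{j'}}y_l\to0$). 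Then pick $\hat S_{n_j}y_l\in X_0$ so close to $S_{n_j}y_l$ that $\|T^{n_{j'}}(\hat S_{n_j}y_l-S_{n_j}y_l)\|\le2^{-j}$ for all $j'\le j$. This yields, for $l\le j$, the estimates $\|\hat S_{n_j}y_l\|\le2^{-j+1}$ and $\|T^{n_j}\hat S_{n_j}y_l-y_l\|\le2^{-j+1}$, and --- combining, for a cross term $T^{n_a}\hat S_{n_b}y_l$, the case $a<b$ (handled at step $b$) with the case $a>b$ (handled at step $a$) --- the key bound
\[
\|T^{n_a}\hat S_{n_b}y_l\|\le2^{-\max(a,b)+1}\qquad(a\ne b,\ l\le\max(a,b)).
\]

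It then remains to set $A_k=\{n_j:j\in I_k\}$ (disjoint infinite sets, hence members of $\A_\infty$), $\hat S_n=0$ for $n\notin\{n_j\}$, take $Y_0=\{y_l\}$ as the dense set, and check the three conditions for each $y_l$. For every $k$ with $\min I_k\ge l$ all indices occurring in $A_k$ are $\ge l$, so the key bound applies throughout $A_k$ and $\sum_{b\in I_k}2^{-b+1}\le2^{-\min I_k+2}$; this gives condition~(3) at once ($\sup_{n\in A_k}\|T^n\hat S_ny_l-y_l\|\le2^{-\min I_k+1}\to0$) and likewise condition~(1) ($\sum_{n\in A_k}\hat S_ny_l$ converges absolutely, uniformly for $k$ past some $k_0(l)$, since a tail is dominated by $\sum_{n_b\ge N}2^{-b+1}$). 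For condition~(2), given $\eps>0$ and $k_0$ I would pick $k\ge k_0$ with $\min I_k\ge l$ and $2^{-\min I_k+2}\le\eps$; then for $F=\{n_b:b\in G\}\subset A_k$ and $n=n_a\in\bigcup_mA_m\setminus F$ every $b\in G$ has $b\ge\min I_k\ge l$, whence $\|\sum_{i\in F}T^n\hat S_iy_l\|\le\sum_{b\in I_k}2^{-b+1}\le\eps$, which is the first half of~(2); and for any $\delta>0$, choosing $l_0>k$ large enough forces the index $a$ of any $n\in\bigcup_{m\ge l_0}A_m$ to be as large as we wish, and splitting $G$ at $a$ the key bound gives $\|\sum_{i\in F}T^n\hat S_iy_l\|\le a\,2^{-a+1}+2^{-a+2}\le\delta$, the second half. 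Thus $T$ satisfies the $\A_\infty$-Frequent Hypercyclicity Criterion. The only genuine obstacle is the one flagged in the first paragraph --- controlling the cross terms with $n>i$ in condition~(2) --- and it is precisely what forces the passage from $S_{n_j}$ to the $X_0$-valued maps $\hat S_{n_j}$; the remaining work is routine bookkeeping with geometric series.
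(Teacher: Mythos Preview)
Your proof is correct and follows essentially the same route as the paper's: both identify that the only real obstacle is controlling the cross terms $T^nS_iy$ with $n>i$, and both resolve it by arranging for the $S$-values to lie in $X_0$ (the paper dispatches this in one line---``since $X_0$ is dense, we can suppose WLOG that $S_{n_k}(Y_0)\subset X_0$''---whereas you unpack it via the explicit approximants $\hat S_{n_j}$), then extract a quantitatively controlled subsequence and split its index set into disjoint infinite blocks (prime powers in the paper, your $I_k$'s here). The paper's argument is terser, but yours makes the mechanism of the ``WLOG'' transparent; the two are the same proof.
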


\begin{proof}
Suppose that $T$ satisfies the Hypercyclicity Criterion for $(n_k)$, $(S_{n_k})$, $X_0$ and $Y_0$. Since $X_0$ is dense, we can suppose without loss of generality that $S_{n_k}(Y_0)\subset X_0$ for any $k\ge 1$. Let $(y_j)$ be a dense sequence in $Y_0$. There exists a subsequence $(m_k)\subset (n_k)$ such that for any $j,k< l$, we have
\[\|S_{m_l}y_j\|\le \frac{1}{l^2},\ 
\|T^{m_{k}}S_{m_l}y_{j}\|\le \frac{1}{l^2},\quad \|T^{m_{l}}S_{m_k}y_{j}\|\le \frac{1}{l^2}\quad\text{and}\quad \|T^{m_l}S_{m_l}y_{j}-y_j\|\le \frac{1}{l^2}.
\] 
Therefore, one has that $T$ satisfies the $\A$-Frequent Hypercyclicity Criterion for $A_k=\{m_{p_k^j} : j\in \N\}$, $k\in\N$, where $p_1=2<p_2=3<\dots $ is the increasing enumeration of the prime numbers, and $Y_0=\{y_j : j\in\N \}$.
\end{proof}

\begin{theorem}\label{9fhc ahc}
Let $X$ be a separable $F$-space, $T\in \mathcal{L}(X)$ and $\A$ a hypercyclicity set.
If $T$ satisfies the Frequent Hypercyclicity Criterion, then $T$ satisfies the $\A$-Frequent Hypercyclicity Criterion for any sequence $(A_k)_{k\ge 1}\in \A^\N$ of disjoint sets such that for any $j\in A_k$, any $j'\in A_{k'}$, $j\ne j'$, we have $|j'-j|\ge \max\{k,k'\}$.
\end{theorem}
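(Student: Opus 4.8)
The plan is to verify that $T$ satisfies the three conditions of the $\A$-Frequent Hypercyclicity Criterion (Theorem~\ref{Ahypc}) using the \emph{same} dense set $Y_0$ and the \emph{same} maps $S_n$ that witness the Frequent Hypercyclicity Criterion, together with the given sequence $(A_k)_{k\ge 1}\in\A^\N$; these $A_k$ are admissible since they are disjoint members of $\A$. Fix $y\in Y_0$. Condition~1 is immediate: unconditional convergence of $\sum_{n\ge 0}S_ny$ gives, for each $\eps>0$, an $N$ with $\|\sum_{n\in G}S_ny\|<\eps$ for every finite $G\subset[N,\infty[$, and this applies in particular to $G\subset A_k\cap[N,\infty[$ for every $k$, so Condition~1 holds with $k_0=1$. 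Condition~3 is just as easy: the $A_k$ are nonempty (since $\emptyset\notin\A$) and pairwise disjoint, so each integer lies in at most one $A_k$ and hence $\min A_k\to\infty$ as $k\to\infty$; combined with $T^nS_ny\to y$ this gives $\sup_{n\in A_k}\|T^nS_ny-y\|\to 0$.

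The substantial point is Condition~2. Given a finite $F\subset A_k$ and an index $n\in\bigcup_{l\ge 1}A_l$ with $n\notin F$, say $n\in A_l$, I would split $F=F^-\cup F^+$ with $F^-=\{i\in F:i<n\}$, $F^+=\{i\in F:i>n\}$, and bound the two pieces separately. In the future part, writing $i=n+m$ we have $\sum_{i\in F^+}T^nS_iy=\sum_{m\in G^+}T^nS_{n+m}y$ with $G^+=\{i-n:i\in F^+\}$, so condition~3 of the Frequent Hypercyclicity Criterion (unconditional convergence of $\sum_{m\ge 0}T^pS_{p+m}y$, uniform in $p$, used with $p=n$) gives, for each $\eta>0$, an $M_\eta$ with $\|\sum_{i\in F^+}T^nS_iy\|<\eta$ as soon as $G^+\subset[M_\eta,\infty[$. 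In the past part, writing $j=n-i$ we have $\sum_{i\in F^-}T^nS_iy=\sum_{j\in G^-}T^nS_{n-j}y$ with $G^-=\{n-i:i\in F^-\}$, and condition~2 of the Frequent Hypercyclicity Criterion (used with $p=n$) gives an $N_\eta$ with $\|\sum_{i\in F^-}T^nS_iy\|<\eta$ as soon as $G^-\subset[N_\eta,\infty[$, i.e.\ as soon as every $i\in F^-$ satisfies $i\le n-N_\eta$. The decisive remark is that the spacing hypothesis forces every $i\in F$ to be far from $n$: since $i\in A_k$, $n\in A_l$ and $i\ne n$, one has $|i-n|\ge\max\{k,l\}$, whence $G^+\subset[\max\{k,l\},\infty[$ and $n-\max F^-\ge\max\{k,l\}$. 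Therefore $\|\sum_{i\in F}T^nS_iy\|\le 2\eta$ whenever $\max\{k,l\}\ge\max\{M_\eta,N_\eta\}$.

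It then remains to package this into the two assertions of Condition~2 with one and the same $k$. Given $k_0\ge 1$ and $\eps>0$, set $\eta=\eps/2$ and $k:=\max\{k_0,M_\eta,N_\eta\}$: for every finite $F\subset A_k$ and every $n\in\bigcup_{l\ge 1}A_l$ with $n\notin F$ (say $n\in A_l$) we get $\max\{k,l\}\ge k\ge\max\{M_\eta,N_\eta\}$, so $\|\sum_{i\in F}T^nS_iy\|\le\eps$, which is the first assertion. For the second, given $\delta>0$ set $\eta'=\delta/2$ and $l_0:=\max\{M_{\eta'},N_{\eta'}\}$: with the same $k$, for every finite $F\subset A_k$ and every $n\in\bigcup_{l\ge l_0}A_l$ with $n\notin F$ we now get $\max\{k,l\}\ge l\ge l_0\ge\max\{M_{\eta'},N_{\eta'}\}$, so $\|\sum_{i\in F}T^nS_iy\|\le\delta$. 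This establishes Condition~2, and with it that $T$ satisfies the $\A$-Frequent Hypercyclicity Criterion for $(A_k)$.

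I expect Condition~2 to be the only real difficulty. The subtle points are to read conditions~2 and~3 of the Frequent Hypercyclicity Criterion correctly, namely as asserting that the finite sum $\sum_{i\in F}T^nS_iy$ is small once every index $i$ lies at distance at least a prescribed threshold from $n$, uniformly in $n$; and then to notice that the spacing condition simultaneously delivers distance $\ge k$ (which lets the first assertion work by choosing $k$ large relative to the thresholds attached to $\eps$) and distance $\ge l$ when $n\in A_l$ (which lets the second assertion work by pushing $n$ out to the sets $A_l$ with $l\ge l_0$). Conditions~1 and~3 need nothing beyond the disjointness of the $A_k$.
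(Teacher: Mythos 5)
Your proof is correct and follows essentially the same route as the paper: keep the same $Y_0$ and $S_n$, read conditions 2 and 3 of the Frequent Hypercyclicity Criterion as saying that $\bigl\|\sum_{i\in F}T^nS_iy\bigr\|$ is small (uniformly in $n$) once every $i\in F$ is at distance at least a threshold from $n$, and then let the spacing hypothesis on $(A_k)$ supply that distance, with Conditions 1 and 3 of the $\A$-criterion handled by disjointness exactly as in the paper. The only difference is that you spell out the quantifier bookkeeping for the two assertions of Condition 2, which the paper leaves implicit.
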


\begin{proof}
Suppose that $T$ satisfies the Frequent Hypercyclicity Criterion for $Y_0$ and $(S_n)$. Let $\varepsilon>0$ and $y\in Y_0$. We then know that there exists $N\ge 1$ such that for any finite set $F\subset [N,\infty[$, we have
\begin{itemize}
\item $\displaystyle{\Big\|\sum_{n\in F}S_ny\Big\|\le \varepsilon}$,
\item for any $k\ge 0$, $\displaystyle{\Big\|\sum_{n\in F\cap[0,k]}T^{k}S_{k-n}y\Big\|\le \varepsilon}$,
\item for any $k\ge 0$, $\displaystyle{\Big\|\sum_{n\in F}T^{k}S_{k+n}y\Big\|\le \varepsilon}$,
\item for any $n\ge N$, $\|T^nS_ny-y\|\le \varepsilon$.
\end{itemize}
In particular, we deduce that for any $k\ge 0$, any finite subset $F\subset \Z_+$, if $d(F,k)\ge N$, then
\[\Big\|\sum_{n\in F}T^kS_ny\Big\|\le\Big\|\sum_{n\in F\cap[0,k[}T^{k}S_{k-(k-n)}y\Big\|+\Big\|\sum_{n\in F\cap]k,\infty[}T^kS_{k+(n-k)}y\Big\|\le 2\varepsilon.\] Let $(A_k)\subset \A$ be a sequence of disjoint sets such that for any $j\in A_k$, any $j'\in A_{k'}$, $j\ne j'$, we have $|j'-j|\ge \max\{k,k'\}$. We deduce that the operator $T$ satisfies the assertions~$2.$ and~$3.$ of the $\A$-Frequent Hypercyclicity Criterion for the sequence $(A_k)$. It is obvious that $T$ satisfies the assertion~$1.$ of the $\A$-Frequent Hypercyclicity Criterion and since for any $N\ge 1$, there exists $k_0$ such that for any $k\ge k_0$, $A_k\cap [0,N]=\emptyset$, we conclude that $T$ also satisfies the assertion~$4.$ of the $\A$-Frequent Hypercyclicity Criterion for the sequence $(A_k)$.
\end{proof}

In particular, we deduce the following result from Theorem~\ref{9fhc ahc}.

\begin{corollary}
Let $X$ be a separable $F$-space and $T\in \mathcal{L}(X)$.
If $T$ satisfies the Frequent Hypercyclicity Criterion, then $T$ is $\A$-frequently hypercyclic for any hypercyclicity set $\A$.
\end{corollary}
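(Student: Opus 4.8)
The plan is to deduce this corollary directly from Theorem~\ref{9fhc ahc} by exhibiting, for an arbitrary hypercyclicity set $\A$, a suitable sequence of disjoint sets in $\A$ to which that theorem applies, and then invoking the $\A$-Frequent Hypercyclicity Criterion itself (Theorem~\ref{Ahypc}) to conclude $\A$-frequent hypercyclicity.

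First I would recall that, by definition of a hypercyclicity set, property~(*) guarantees the existence of a sequence $(A_k)\subset\A$ of pairwise disjoint sets such that for any $j\in A_k$ and any $j'\in A_{k'}$ with $j\ne j'$ one has $|j'-j|\ge\max\{k,k'\}$. This is precisely the hypothesis on the sequence $(A_k)$ required in Theorem~\ref{9fhc ahc}. So, assuming $T$ satisfies the Frequent Hypercyclicity Criterion, Theorem~\ref{9fhc ahc} tells us that $T$ satisfies the $\A$-Frequent Hypercyclicity Criterion for this particular sequence $(A_k)$. Finally, the $\A$-Frequent Hypercyclicity Criterion (Theorem~\ref{Ahypc}) asserts that an operator satisfying it is $\A$-frequently hypercyclic, so $T$ is $\A$-frequently hypercyclic, as desired.

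There is essentially no obstacle here: the corollary is a formal consequence of stitching together the definition of a hypercyclicity set, Theorem~\ref{9fhc ahc}, and Theorem~\ref{Ahypc}. The only point that deserves a word of care is checking that the sequence $(A_k)$ furnished by property~(*) indeed has all three features demanded by Theorem~\ref{9fhc ahc}—disjointness, membership in $\A$, and the gap condition $|j'-j|\ge\max\{k,k'\}$—but this is immediate by reading off property~(*) of Definition~\ref{univset}. Thus the proof is a one-line chain of implications, and I would write it as such.

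\begin{proof}
Since $\A$ is a hypercyclicity set, by property~(*) of Definition~\ref{univset} there exists a sequence $(A_k)_{k\ge 1}\in\A^{\N}$ of pairwise disjoint sets such that for any $j\in A_k$, any $j'\in A_{k'}$ with $j\ne j'$, we have $|j'-j|\ge\max\{k,k'\}$. By Theorem~\ref{9fhc ahc}, since $T$ satisfies the Frequent Hypercyclicity Criterion, $T$ satisfies the $\A$-Frequent Hypercyclicity Criterion for this sequence $(A_k)_{k\ge 1}$. Hence, by Theorem~\ref{Ahypc}, $T$ is $\A$-frequently hypercyclic.
\end{proof}
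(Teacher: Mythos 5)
Your proof is correct and is exactly the argument the paper intends: the corollary is stated as a direct consequence of Theorem~\ref{9fhc ahc}, obtained by feeding it the disjoint sequence $(A_k)\subset\A$ with the gap condition that property~(*) of Definition~\ref{univset} provides, and then invoking Theorem~\ref{Ahypc}. Nothing further is needed.
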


\subsection{Characterization of $\A$-frequently hypercyclic weighted shifts}\label{SEC Shift}

Weighted shifts are one of the most important family of operators in linear dynamics. The goal of this section consists in showing how we can apply the $\A$- Hypercyclicity Criterion in order to characterize the $\A$-frequently hypercyclic weighted shifts on $\ell^p(\Z_+)$ ($1\le p<\infty$) or on $c_0(\Z_+)$.

\begin{theorem}\label{characshift}
Let $\A$ be a hypercyclicity set and $B_w$ a weighted shift on $X$ where $X=\ell^p(\Z_+)$ $(1\le p<\infty)$ or $c_0(\Z_+)$.
The following assertions are equivalent:
\begin{enumerate}
\item[1)] $B_w$ is $\A$-frequently hypercyclic,
\item[2)] $B_w$ satisfies the $\A$-Frequent Hypercyclicity Criterion,
\item[3)] there is a sequence $(A_k)_{k\ge 1}\subset \A$ of disjoints sets such that
\begin{enumerate}
\item[i.] for any $j\in A_k$, any $j'\in A_{k'}$, $j\ne j'$, we have $|j'-j|\ge \max\{k,k'\}$.
\item[ii.] for any $k'\ge 0$, any ${k> k'}$,
\[
\sum_{n\in A_k+k'}\frac{e_{n}}{\prod_{\nu=1}^nw_{\nu}}\in X\quad\text{and}\quad \sum_{n\in A_k+k'}\frac{e_n}{\prod_{\nu=1}^nw_{\nu}}\xrightarrow[]{k\to \infty}0;
\]
\item[iii.] there exists a family $(C_{k,l})_{k,l\ge 1}$ such that for any $k'\ge 0$, any ${k> k'}$, any $l\ge 1$,
\[
\sup_{j\in A_l}\Big\|\sum_{n\in A_k-j}\frac{e_{n+k'}}{\prod_{\nu=1}^nw_{\nu+k'}}\Big\|\le C_{k,l}
\]
and such that $\sup_l C_{k,l}$ converges to $0$ when $k\rightarrow \infty$ and, for any $k\ge 0$, $C_{k,l}$ converges to $0$ when $l\rightarrow \infty$.
\end{enumerate}
\end{enumerate}
\end{theorem}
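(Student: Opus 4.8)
The natural strategy is to prove the cycle $3)\Rightarrow 2)\Rightarrow 1)\Rightarrow 3)$. The implication $2)\Rightarrow 1)$ is already available: it is exactly the $\A$-Frequent Hypercyclicity Criterion (Theorem~\ref{Ahypc}), so nothing needs to be done there. For $3)\Rightarrow 2)$, the plan is to take the sequence $(A_k)$ provided by $3)$ and define the candidate maps $S_n\colon Y_0\to X$ on the dense set $Y_0$ of finitely supported vectors by letting $S_n$ act as a "formal right inverse" of $B_w^n$, i.e. $S_n e_j = \frac{\prod_{\nu=1}^{j}w_\nu}{\prod_{\nu=1}^{n+j}w_\nu}\, e_{n+j}$ (equivalently $S_n = F_w^n$ for the associated weighted forward shift, suitably normalised so that $B_w^n S_n = \mathrm{id}$ on $Y_0$), and then translate conditions i.--iii. into conditions $1.$--$3.$ of Theorem~\ref{Ahypc}. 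Condition~iii. with $j\in A_l$ and the supremum over $A_l$ is designed precisely to control $\big\|\sum_{i\in F}B_w^n S_i y\big\|$ when $n$ ranges over $\bigcup_{l\ge l_0}A_l$ and $F\subset A_k$: writing $n\in A_l$, the shift $B_w^n$ applied to $\sum_{i\in F}S_i e_{k'}$ produces $\sum_{i\in F}\frac{e_{i-n+k'}}{\prod w}$-type terms, which after reindexing $A_k - j$ (here $j=n\in A_l$) is bounded by $C_{k,l}$; the two decay properties of $C_{k,l}$ (in $k$ with $\sup_l$, and in $l$ for fixed $k$) give respectively the two sub-statements of condition~$2.$ of Theorem~\ref{Ahypc}. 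Condition~ii. is just the unconditional uniform convergence in condition~$1.$ (the $k'=0$ case giving membership in $X$, the general $k'$ handling $S_n$ applied to an arbitrary basis vector $e_{k'}$), and condition~$2.$iii.\ of the criterion ($\sup_{n\in A_k}\|B_w^n S_n y - y\|\to 0$) holds trivially since $B_w^n S_n = \mathrm{id}$ on $Y_0$, so that supremum is identically $0$.

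For $1)\Rightarrow 3)$, the plan is to start from an $\A$-frequently hypercyclic vector $x$ and manufacture the sets $A_k$ from the times $x$ spends in a shrinking family of neighbourhoods of $e_0$ (or of a generic finitely supported vector). The key point is that for a weighted shift, "$B_w^n x$ is close to $e_0$" forces the $n$-th block of coordinates of $x$ to be close to $\frac{1}{\prod_{\nu=1}^{n}w_\nu}$ at position $0$ (and small elsewhere in a window), so that $\sum_{n\in A_k}\frac{e_n}{\prod w}$ being summable and small is essentially a consequence of $\|x\|<\infty$ together with separation of the relevant $n$'s. Here the separation property~i.\ is obtained by the same device as in Proposition~\ref{propban}: by looking simultaneously at $N(x,B(0,C))$ and $N(x,B(e_0,\eps))$ one extracts, inside $N(x,B(e_0,\eps_k))$, a subfamily of times that are pairwise $\ge k$ apart and $\ge k$ apart from the previous levels; intersecting with $\A$ (which is hereditarily upward, so these subsets may fail to be in $\A$ — this is the subtle point) requires care. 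Concretely I would choose $A_k$ to be a member of $\A$ contained in $N(x,V_k)$ for an appropriate open set $V_k$, then \emph{thin it out} to get property~i.; to stay inside $\A$ after thinning one uses property~(*) of the hypercyclicity set, selecting the disjoint well-separated sets $(A_k)$ guaranteed by (*) and then showing that, after finitely many modifications, the times of $x$ in $V_k$ along $A_k$ still witness ii.\ and iii.

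\textbf{Main obstacle.} The routine direction is $3)\Rightarrow 2)$ (bookkeeping with the explicit $S_n$). The genuinely delicate part is $1)\Rightarrow 3)$, and within it the simultaneous fulfilment of the two quantitative decay requirements in iii.---namely that $C_{k,l}$ decays in $l$ for each fixed $k$ \emph{and} that $\sup_l C_{k,l}\to 0$ as $k\to\infty$---while keeping all the $A_k$ inside the family $\A$. The decay in $l$ for fixed $k$ reflects that, as the "source" times $j\in A_l$ move far out, the tail $\sum_{n\in A_k-j}\frac{e_{n+k'}}{\prod w_{\nu+k'}}$ sees only large indices and hence is small by summability of $x$; the decay of $\sup_l C_{k,l}$ in $k$ reflects that the sets $V_k$ shrink. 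Balancing these two while respecting the rigidity of which subsets belong to $\A$ (one cannot simply discard inconvenient elements of $A_k$ and stay in $\A$) is the crux, and I expect it to require interleaving the construction of the $A_k$ with the choice of the neighbourhoods $V_k$ and an appeal to property~(*), exactly as the statement of Theorem~\ref{characshift} has been set up to allow.
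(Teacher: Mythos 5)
Your outline of $3)\Rightarrow 2)$ (explicit right inverses $S^n$ on $Y_0=\spa\{e_k:k\ge 0\}$, with ii.\ giving condition~1.\ of Theorem~\ref{Ahypc} and the two decay properties of $C_{k,l}$ in iii.\ giving the two halves of condition~2.) and of $2)\Rightarrow 1)$ matches the paper. The gap is in $1)\Rightarrow 3)$, and it sits exactly at the point you yourself flag as ``the subtle point''. Your plan is to take $A_k\in\A$ inside $N(x,V_k)$ for shrinking neighbourhoods $V_k$ of $e_0$ and then \emph{thin} $A_k$ to obtain the separation property i., invoking property~(*) to stay inside $\A$. This cannot work as stated: $\A$ is only hereditarily upward, so a thinned-out subset of a member of $\A$ has no reason to remain in $\A$, and the disjoint well-separated sets supplied by (*) are abstract members of $\A$ with no relation to the return sets $N(x,V_k)$ of your particular vector; intersecting the two families again takes you out of $\A$. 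In the paper's proof of $1)\Rightarrow 3)$ no element of any return set is ever discarded and property~(*) is not invoked at all.

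The idea you are missing is the choice of target vectors. The paper takes $A_k:=N\bigl(x,B\bigl((C_k+\eps_k)\sum_{k'=0}^{k}e_{k'},\eps_k\bigr)\bigr)\in\A$ with $C_k\uparrow\infty$ and $\eps_k\downarrow 0$, where $C_k$ is chosen to grow fast enough relative to the weights (conditions \eqref{cond 1}--\eqref{cond 3}). Then $n\in A_k$ forces $\bigl|\prod_{\nu=1}^{n}w_{\nu+k'}x_{n+k'}\bigr|$ to lie in $(C_k,C_k+2\eps_k)$ for $0\le k'\le k$ and to be $<\eps_k$ for $k'>k$; two indices at distance $\le\max\{k,k'\}$ in the same or different levels would impose incompatible size estimates on a single coordinate of $x$, so the \emph{full} sets $A_k$ are automatically pairwise separated --- no thinning needed. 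The same device produces iii.: for $j\in A_l$ the tail of $B_w^jx$ beyond coordinate $l$ has norm $<\eps_l$, and combining this with the lower bound $C_k$ on the weight products along $A_k$ yields $C_{k,l}=\eps_l/C_k$, which decays in $l$ for fixed $k$ and satisfies $\sup_lC_{k,l}=\eps_1/C_k\to0$. Note that the decay in $k$ comes from $C_k\to\infty$ (the centres of the $V_k$ escape to infinity in norm), not from the radii of the $V_k$ shrinking as in your sketch. Without this choice of targets I do not see how your plan for $1)\Rightarrow 3)$ can be completed.
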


\begin{remark}
In the statement of this theorem and the rest of this section, for any $A\subset \Z_+$, any $j\in\Z_+$, we denote by $\sum_{n\in A-j}$ the series $\sum_{n\in(A-j)\cap \N}$ and by
 $\sum_{n\in j-A}$ the series $\sum_{n\in(j-A)\cap \N}$. We also suppose by convention that $\prod_{\nu=1}^{0}w_\nu=1$.
\end{remark}

\begin{proof}
$3)\Rightarrow 2)$. Let $S((x_n)_{n\ge 0})=(0,\frac{x_0}{w_1},\frac{x_1}{w_2},\dots)$. We show that $B_w$ satisfies the $\A$-Frequent Hypercyclicity Criterion for the set $Y_0=\text{span}\{e_k:k\ge 0\}$, the maps $(S^n)_{n\ge 0}$ and the sets $A_k$ given by the assertion $3)$.
We notice that for any $x\in Y_0$, any $n\ge 1$, any finite set $F\subset \Z_+$, we have
\begin{equation}
\sum_{i\in F\backslash\{n\}}B_w^nS^ix=\sum_{\substack{i\in F\\ i<n}}B_w^{n-i}x+\sum_{\substack{i\in F\\ i>n}}S^{i-n}x
=\sum_{i\in n-F}B_w^{i}x+\sum_{i\in F-n}S^{i}x.
\label{eq B}
\end{equation}

We prove that $B_w$ satisfies each assumption of the $\A$-Frequent Hypercyclicity Criterion for any vector $e_{k'}$:
\begin{enumerate}
\item Let $k'\ge 0$. For any $k> k'$, we deduce from ii. that
\[\sum_{n\in A_k}S^ne_{k'}=\sum_{n\in A_k}\frac{e_{k'+n}}{\prod_{\nu=1}^nw_{\nu+k'}}=\prod_{\nu=1}^{k'}w_{\nu}\cdot\sum_{n\in A_k+k'}\frac{e_{n}}{\prod_{\nu=1}^nw_{\nu}}\in X.\]
In particular, each series $\sum_{n\in A_k}S^ne_{k'}$ converges unconditionally and since $\sum_{n\in A_k}S^ne_{k'}$ tends to $0$ as $k\to \infty$ by ii., we deduce that $\sum_{n\in A_k}S^ne_{k'}$ converges unconditionally, uniformly in $k> k'$.
\item Let $k'\ge 0$, $k_0\ge 0$ and $\varepsilon>0$. For any $k> k'$, we deduce from~i. that for any $j\in \bigcup_l A_l$, \[\sum_{n\in j-A_k}B_w^ne_{k'}=0\] and if $j\in A_l$, we deduce from~iii. that \[\Big\|\sum_{n\in A_k-j}S^ne_{k'}\Big\|\le C_{k,l}.\]
Since $\sup_{l}C_{k,l}\rightarrow 0$ when $k\to \infty$, there exists $k>\max\{k',k_0\}$ such that we have for any $j\in \bigcup_l A_l$, $\|\sum_{n\in A_k-j}S^ne_{k'}\|\le \varepsilon$. On the other hand, since $C_{k,l}$ converges to $0$ when $l\rightarrow 0$, for any $\delta>0$, there exists $l_0\ge 1$ such that for any $l\ge l_0$, we have $C_{k,l}\le \delta$. We  conclude by using~$\eqref{eq B}$.

\item Obvious.\\
\end{enumerate}

\noindent $2)\Rightarrow 1)$ follows from Theorem~\ref{Ahypc}.\\

\noindent $1)\Rightarrow 3)$. Let $x$ be an $\A$-frequently hypercyclic vector for $B_w$. For any $k\ge 1$, any $C_k>0$, any $\varepsilon_k>0$, there exists $A_k \in \A$ such that for any $n\in A_k$,
\[\Big\|B^n_wx-(C_k+\varepsilon_k)\sum_{k'=0}^{k}e_{k'}\Big\|<\varepsilon_k.\]
We will fix $C_k$ and $\varepsilon_k$ later but we already suppose that
$(C_k)$ is an increasing sequence tending to $\infty$ and $(\varepsilon_k)$ is a decreasing sequence tending to $0$. We first notice that for any $n\in A_k$, any $k'> k$, we have
\begin{equation}
\Big|\prod_{\nu=1}^nw_{\nu+k'}x_{n+k'}\Big|<\varepsilon_k
\label{eq: fhc0}
\end{equation}
and for any $0 \le k'\le k$, we have
\[\Big|\prod_{\nu=1}^nw_{\nu+k'}x_{n+k'}- (C_k+\varepsilon_k)\Big|<\varepsilon_k\]
and thus
\begin{equation}
C_k<\Big|\prod_{\nu=1}^nw_{\nu+k'}x_{n+k'}\Big|<C_k+2\varepsilon_k.
\label{eq: fhc}
\end{equation}
We show that the sequence $(A_k)_{k\ge 1}$ satisfies i., ii. and iii. for a good choice of $(C_k)$ and $(\varepsilon_k)$:
\begin{enumerate}
\item  We show that the sets $(A_k)$ are disjoints and that for any $j\in A_k$, any $j'\in A_{k'}$, $j\ne j'$, we have $|j'-j|> \max\{k,k'\}$.

Let $1\le k'< k$ and $j\in A_k$. We have to show that for any $0\le n\le k$, we have $j+n\notin A_{k'}$ and $j-n\notin A_{k'}$. Let $0\le n\le k$.
We know by \eqref{eq: fhc} that $\Big|\prod_{\nu=1}^jw_{\nu+k}x_{j+k}\Big|>C_k$. We deduce that
\begin{equation}
\Big|\prod_{\nu=1}^{j+n}w_{\nu+k-n}x_{j+k}\Big|>C_k\min_{0\le n'\le k}\min_{\nu\le k}|w_{\nu}|^{n'} \label{eq3}
\end{equation}
and
\begin{equation}
\Big|\prod_{\nu=1}^{j-n}w_{\nu+k+n}x_{j+k}\Big|>\frac{C_k}{\max_{0\le n'\le k}\max_{k\le \nu\le 2k}|w_{\nu}|^{n'}}\label{eq4}.
\end{equation}
On the other hand, we also know by \eqref{eq: fhc0} and \eqref{eq: fhc} that for any $m\in A_{k'}$, any $l\ge 0$,
\begin{equation}
\Big|\prod_{\nu=1}^mw_{\nu+l}x_{m+l}\Big|<C_{k'}+2\varepsilon_{k'}.\label{eq5}
\end{equation}
In particular, if $j+n\in A_{k'}$, we would have, for $l=k-n$,
\begin{equation}\Big|\prod_{\nu=1}^{j+n}w_{\nu+k-n}x_{j+k}\Big|<C_{k'}+2\varepsilon_{k'}\le C_{k-1}+2\varepsilon_{1}.\label{eq6}\end{equation}
We then deduce from \eqref{eq3} and \eqref{eq6} that if we suppose
\begin{equation}
C_k>\frac{C_{k-1}+2\varepsilon_{1}}{\min_{0\le n'\le k}\min_{\nu\le k}|w_{\nu}|^{n'}},
\label{cond 1}
\end{equation}
then for any $k'< k$, any $0\le n\le k$, we have $j+n\notin A_{k'}$.
On the other hand, if we suppose that $j-n\in A_{k'}$, then we would have by \eqref{eq5} for $l=k+n$,
\begin{equation}
\Big|\prod_{\nu=1}^{j-n}w_{\nu+k+n}x_{j+k}\Big|<C_{k'}+2\varepsilon_{k'} \le C_{k-1}+2\varepsilon_{1}.
\label{eq7}
\end{equation}
Hence, if we suppose
\begin{equation}
C_k>(C_{k-1}+2\varepsilon_{1})\max_{0\le n'<k}\max_{k\le \nu\le 2k}|w_{\nu}|^{n'},
\label{cond 2}
\end{equation}
then, by \eqref{eq4} and \eqref{eq7}, we deduce that for any $k'< k$, any $0\le n\le k$, we have $j-n\notin A_{k'}$ and thus $d(A_{k'},A_k)>k$.

If $k=k'$, it suffices to show that for any $j\in A_k$, any $1\le n\le k$, we have $j-n\notin A_k$. By \eqref{eq: fhc0}, we know that for any $m\in A_k$, any $l>k$,
\begin{equation*}
\Big|\prod_{\nu=1}^mw_{\nu+l}x_{m+l}\Big|<\varepsilon_{k}.
\end{equation*}
If $j-n\in A_{k}$, we would thus have, for $l=k+n$,
\begin{equation}\Big|\prod_{\nu=1}^{j-n}w_{\nu+k+n}x_{j+k}\Big|<\varepsilon_{k}\le \varepsilon_{1}.\label{eq6b}\end{equation}
We deduce from \eqref{eq4} and \eqref{eq6b} that if
\begin{equation}
C_k>\varepsilon_{1}\max_{0\le n'\le k}\max_{k\le \nu\le 2k}|w_{\nu}|^{n'}
\label{cond 3}
\end{equation}
then  $j-n\notin A_k$ for any $1\le n\le k$.

The property i. is thus satisfied if we choose $(C_k)$ such that Conditions \eqref{cond 1}, \eqref{cond 2} and \eqref{cond 3} are satisfied. For the sequence $(\varepsilon_k)$, we can consider any decreasing sequence tending to $0$.

\item Let $k'\ge 0$. We know by \eqref{eq: fhc} that for any $k> k'$, any $n\in A_k$,
\[\frac{1}{\prod_{\nu=1}^n|w_{\nu+k'}|}<\frac{|x_{n+k'}|}{C_{k}}.\]
Hence, we have
\begin{align*}
\Big\|\sum_{n\in A_k+k'}\frac{e_n}{\prod_{\nu=1}^nw_{\nu}}\Big\|&=\Big\|\sum_{n\in A_k}\frac{e_{n+k'}}{\prod_{\nu=1}^{n+k'}w_{\nu}}\Big\|\\
&=\frac{1}{\prod_{\nu=1}^{k'}|w_{\nu}|}
\Big\|\sum_{n\in A_k}\frac{e_{n+k'}}{\prod_{\nu=1}^{n}|w_{\nu+k'}|}\Big\|\\
&\le \frac{\|x\|}{C_k\prod_{\nu=1}^{k'}|w_{\nu}|}<\infty
\end{align*}
and since $C_k\to \infty$, we deduce that
\[\sum_{n\in A_k+k'}\frac{e_n}{\prod_{\nu=1}^nw_{\nu}}\xrightarrow[]{k\to \infty}0.\]
\item We show that for any $k'\ge 0$, any $k> k'$, any $l\ge 1$, any $j\in A_l$, we have
\[\Big\|\sum_{n\in A_k-j} \frac{C_k}{\prod_{\nu=1}^{n}w_{\nu+k'}}e_{n+k'}\Big\|<\varepsilon_l.\]
Let  $k>k'\ge 0$, $l\ge 1$ and $j\in A_l$. We have, by definition of $A_l$,
\begin{align*}
\varepsilon_l&>\Big\|B_w^jx-(C_l+\varepsilon_l)\sum_{l'=0}^le_{l'}\Big\|\ge \Big\|\sum_{n=l+1}^{\infty}\Big(\prod_{\nu=1}^jw_{\nu+n}\Big)x_{j+n}e_{n}\Big\|\\
&\ge \Big\|\sum_{n=l+1}^{\infty}\Big(\prod_{\nu=1}^jw_{\nu+n+k'}\Big)x_{j+n+k'}e_{n+k'}\Big\|\\
&\ge \Big\|\sum_{n\in A_k-j}\Big(\prod_{\nu=1}^jw_{\nu+n+k'}\Big)x_{j+n+k'}e_{n+k'}\Big\| \quad \text{by 1.}\\
&= \Big\|\sum_{\substack{n\in A_k\\ n>j}}\Big(\prod_{\nu=1}^jw_{\nu+n-j+k'}\Big)x_{n+k'}e_{n-j+k'}\Big\|\\
&= \Big\|\sum_{\substack{n\in A_k\\ n>j}} \Big(\frac{\prod_{\nu=1}^nw_{\nu+k'}}{\prod_{\nu=1}^{n-j}w_{\nu+k'}}\Big) x_{n+k'}e_{n-j+k'}\Big\|\\
&\ge \Big\|\sum_{\substack{n\in A_k\\ n>j}} \frac{C_k}{\prod_{\nu=1}^{n-j}w_{\nu+k'}}e_{n-j+k'}\Big\| \quad\text{by \eqref{eq: fhc}}\\
&=\Big\|\sum_{n\in A_k-j} \frac{C_k}{\prod_{\nu=1}^{n}w_{\nu+k'}}e_{n+k'}\Big\|.
\end{align*}
We deduce that
\[\Big\|\sum_{n\in A_k-j}\frac{e_{n+k'}}{\prod_{\nu=1}^nw_{\nu+k'}}\Big\|\le \frac{\varepsilon_l}{C_k}\]
and since
\[\sup_l \frac{\varepsilon_l}{C_k}\le \frac{\varepsilon_1}{C_k}\xrightarrow[k\rightarrow \infty]{} 0 \quad\text{and}\quad
\frac{\varepsilon_l}{C_k}\xrightarrow[l\rightarrow \infty]{}0,\]
we obtain the desired result.
\end{enumerate}
\end{proof}

An important result of Bayart and Ruzsa \cite{Bayart2} about frequently hypercyclic weighted shifts on $\ell^p$  is that a weighted shift $B_w$ on $\ell^p$ is frequently hypercyclic if and only if $B_w$ is chaotic and thus if and only if
\begin{equation}
\sum_{n=1}^{\infty}\frac{1}{\prod_{\nu=1}^{n}|w_{\nu}|^p}<\infty.
\label{9caraclp}
\end{equation}
However, we know that this equivalence is false for weighted shifts on $c_0$. Indeed, Bayart and Grivaux \cite{2Bayart1} have exhibited a frequently hypercyclic weighted shift on $c_0$ that is neither chaotic nor mixing. The characterization of frequently hypercyclic weighted shifts that we obtain in term of weights is not satisfactory in the case of the spaces $\ell^p$ if we compare it with the characterization obtained by Bayart and Ruzsa. However, in the case of frequently hypercyclic weighted shifts on $c_0$, the obtained characterization is similar to the characterization given in \cite{Bayart2}.

Thanks to the counterexample of Bayart and Grivaux, we also know that the Frequent Hypercyclicity Criterion does not characterize frequently hypercyclic operators, because if $T$ satisfies the Frequent Hypercyclicity Criterion, then $T$ is mixing and chaotic. However, the characterization given by Theorem~\ref{characshift} tells us that each frequently hypercyclic weighted shift on $c_0$ satisfies the $\A$-Frequent Hypercyclicity Criterion when $\A$ is the family of positive lower density sets. A direct consequence is the existence of operators that are neither mixing nor chaotic and that satisfy the $\A$-Frequent Hypercyclicity Criterion when $\A$ is the family of positive lower density sets. One can thus wonder if the $\A$-Frequent Hypercyclicity Criterion characterizes the frequently hypercyclic operators when $\A$ is the set of positive lower density sets.

\section{Banach densities and reiterative hypercyclicity}\label{dens}

The purpose of this section is to analyze which kind of properties of density can have the sets $N(x, U)$ and classify the hypercyclic operators accordingly to these properties. We first remark that there does not exist $\A$-frequently hypercyclic operators if $\A$ is the family of sets with positive lower Banach density or if $\A$ is the family of sets with upper Banach density equal to~$1$.

\begin{proposition}\label{prop.syndetic}
Let $X\ne\{0\}$ be a $F$-space. If $\A$ is the family of sets with positive lower Banach density, then $X$ does not support an $\A$-frequently hypercyclic operator.
\end{proposition}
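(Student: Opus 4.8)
The plan is to argue by contradiction. Suppose $T\in L(X)$ were $\A$-frequently hypercyclic, with $\A$ the family of sets of positive lower Banach density, and let $x$ be an $\A$-frequently hypercyclic vector; in particular $x$ is hypercyclic. The idea is that once the orbit of $x$ enters a very small ball around $0$, continuity at $0$ of the first few powers of $T$ (which all fix $0$) keeps a whole block of consecutive iterates inside a fixed ball $B(0,r)$; if $y\neq 0$ and $r$ is chosen so that $B(0,r)$ avoids the ball $B(y,r)$, this produces arbitrarily long gaps in $N(x,B(y,r))$, contradicting the fact that a set of positive lower Banach density is syndetic.

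First I would record the quantitative form of syndeticity that I need. If $A\subseteq\Z_+$ has $\underline{Bd}(A)>0$, then, since $\alpha_s/s$ tends to a positive limit (Definition~\ref{definition.density}), one can fix $s_0$ with $\alpha_{s_0}\geq 1$; unwinding $\alpha_{s_0}=\liminf_k|A\cap[k+1,k+s_0]|$ then produces $k_0$ such that $[k+1,k+s_0]\cap A\neq\emptyset$ for every $k\geq k_0$. Equivalently, every block of $s_0$ consecutive integers lying beyond $k_0$ contains a point of $A$.

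Now fix $y\in X$ with $y\neq 0$ (possible since $X\neq\{0\}$) and set $r:=\|y\|/3>0$, so that $\|y\|-r=2r>r$ and hence every $z$ with $\|z\|<r$ satisfies $\|z-y\|\geq\|y\|-\|z\|>\|y\|-r=2r>r$; thus $B(0,r)$ and $B(y,r)$ are disjoint, and no point of $B(0,r)$ lies in $B(y,r)$. Applying the previous paragraph to $A:=N(x,B(y,r))\in\A$ gives $s_0,k_0$ as above. Since the operators $T^0,T^1,\dots,T^{s_0}$ are continuous and fix $0$, there is $\rho>0$ with $T^m(B(0,\rho))\subseteq B(0,r)$ for every $0\leq m\leq s_0$. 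Because $x$ is hypercyclic, $N(x,B(0,\rho))$ is infinite, so we may pick $n>k_0$ with $T^n x\in B(0,\rho)$. Then $T^{n+m}x=T^m(T^n x)\in B(0,r)$, hence $T^{n+m}x\notin B(y,r)$, for all $0\leq m<s_0$; that is, the block $[(n-1)+1,(n-1)+s_0]$ of $s_0$ consecutive integers misses $A$ even though $n-1\geq k_0$. This contradicts the choice of $s_0$ and $k_0$, completing the proof.

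I do not anticipate a genuine obstacle: the two ingredients are the elementary passage from positive lower Banach density to this quantitative syndeticity, and the routine fact that continuity at $0$ of each of the finitely many iterates $T^0,\dots,T^{s_0}$ lets smallness of a single orbit point $T^nx$ be propagated to a whole block of subsequent iterates. It is worth noting that the argument uses only that $N(x,B(y,r))$ is syndetic, so it actually shows that no nonzero $F$-space supports an operator admitting a vector $x$ with $N(x,U)$ syndetic for every nonempty open $U$; positive lower Banach density enters solely through the implication ``positive lower Banach density $\Rightarrow$ syndetic,'' which is precisely what separates this situation from frequent, $\mathfrak{U}$-frequent, and reiterative hypercyclicity.
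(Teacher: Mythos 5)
Your proof is correct and takes essentially the same route as the paper: positive lower Banach density of $N(x,U)$ for an open set $U$ bounded away from $0$ gives bounded gaps, while continuity of the finitely many powers $T^0,\dots,T^{s_0}$ at $0$ keeps a whole block of consecutive iterates near $0$ once the orbit enters a small neighbourhood of $0$, producing a forbidden gap. The only cosmetic difference is that the paper works with disjoint open sets $U\ni x$ and $V\ni 0$ rather than the balls $B(y,r)$ and $B(0,r)$, and you spell out explicitly the passage from positive lower Banach density to syndeticity.
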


\begin{proof}
 Assume towards a contradiction that there exists an $\A$-frequently hypercyclic operator $T$ on $X$. Let $x\in X$ be an $\A$-frequently hypercyclic vector for $T$.

Take $U, V$ non-empty open sets such that $x\in U$, $0\in V$ and $U\cap V=\emptyset$. If we denote the maximum gap of $N(x, U)$ as $m$, then by continuity there exists $W$ a neighbourhood of zero such that $T^j(W)\subset V, j=0,1,\dots,m$. Let $n$ such that $T^nx\in W$. We deduce that $T^{n+j}x\in V$ for any $0\le j \le m$ and therefore $\{n, n+1, \dots, n+m\}\notin N(x, U)$ which is a contradiction since this implies that there are gaps in $N(x, U)$ with length greater than~$m$.
\end{proof}

\begin{proposition}\label{prop.thick}
 Let $X\ne\{0\}$ be a $F$-space. If $\A$ is the family of sets with upper Banach density equals to~$1$, then $X$ does not support an $\A$-frequently hypercyclic operator.
\end{proposition}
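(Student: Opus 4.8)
The statement to prove (Proposition~\ref{prop.thick}) is the analogue of Proposition~\ref{prop.syndetic} for \emph{thick} sets (a set has upper Banach density $1$ precisely when it contains arbitrarily long intervals). The strategy is dual to the syndetic case: there we used a target set $U$ avoiding $x$ to force a gap in $N(x,U)$; now I would use a target set $U$ that $x$ itself, or more conveniently a point of the orbit, must eventually leave, in order to prevent $N(x,U)$ from containing long intervals. More precisely, I plan to argue by contradiction: suppose $T$ is $\A$-frequently hypercyclic for $\A=\{A\subseteq\Z_+:\overline{Bd}(A)=1\}$, and let $x$ be such a vector.

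\begin{proof}
Assume towards a contradiction that $T$ is $\A$-frequently hypercyclic on $X$, and let $x$ be an $\A$-frequently hypercyclic vector. Since $X\ne\{0\}$, the operator $T$ is hypercyclic, so its norm behaviour is not the obstruction; instead we use the following simple observation. Pick a non-empty open set $U$ with $0\notin\overline{U}$, say $U=B(y,\eps)$ with $\|y\|>2\eps$. I claim that $N(x,U)$ cannot contain arbitrarily long intervals. Indeed, suppose $\{m,m+1,\dots,m+s\}\subset N(x,U)$ for some $m$ and some large $s$. Then $T^mx$ lies in $U$ and stays in $U$ under the next $s$ iterates of $T$; equivalently, setting $z=T^mx$, the finite orbit segment $z,Tz,\dots,T^sz$ is contained in $B(y,\eps)$. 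Now I would derive a contradiction from the density of the orbit of $x$ (equivalently of $z$, since $z=T^mx$ still has dense orbit under $T$): because the orbit of $z$ is dense and $X\ne\{0\}$, there is a point of the orbit close to $0$, hence some iterate $T^iz$ with $\|T^iz\|<\eps$; but if such an $i$ could always be chosen with $i\le s$ for every $s$, that already contradicts $T^iz\in B(y,\eps)\subset\{\|\cdot\|>\eps\}$. The point requiring care is uniformity: I must show that the "time to escape $U$" is bounded \emph{independently of $m$}, i.e.\ there is a fixed $s_0$ such that no orbit segment of length $s_0$ starting anywhere in the orbit of $x$ stays in $U$. This I obtain from compactness-free reasoning on $U$ together with the hypercyclicity of $T$: one fixes a non-empty open $W$ with $\overline{W}\cap U=\emptyset$ (e.g.\ a small ball around $0$), uses that $N(x,W)$ is infinite, and notes that once $T^nx\in W$ the previous $s_0$ iterates — for $s_0$ chosen by continuity so that $T^{-j}$-preimages of $W$ near $T^nx$ still miss $U$ — cannot all have been in $U$. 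This bounds the interval lengths in $N(x,U)$ by $s_0$, so $\overline{Bd}(N(x,U))<1$, contradicting $N(x,U)\in\A$.
\end{proof}

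**Main obstacle.** The delicate point is exactly the uniformity described above: one must produce a single bound $s_0$ on the length of intervals contained in $N(x,U)$ that does not depend on where the interval sits. In the syndetic case (Proposition~\ref{prop.syndetic}) the analogous uniformity came for free because $N(x,U)$ itself had a finite maximal gap $m$, which was then used to push the complement orbit into $V$. Here the roles of $U$ and its complement are swapped, and the natural argument is: the orbit of $x$ hits the open set $W$ (around $0$, disjoint from $\overline U$) at infinitely many times $n$; by continuity at each such $T^nx$ there is a neighbourhood mapped into the complement of $U$ by $T^0,\dots,T^{s_0}$ — but the neighbourhood, and hence $s_0$, depends on $n$. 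To get a uniform $s_0$ one instead runs the continuity argument "backwards in time" from a single hitting time, or uses that $T$ is continuous and $0$ is fixed: for any $s_0$ there is a neighbourhood $W$ of $0$ with $T^j(W)\cap U=\emptyset$ for $0\le j\le s_0$, and then a hit of $N(x,W)$ forces a gap of length $s_0$ in $N(x,U)$ — and since $s_0$ was arbitrary, $N(x,U)$ contains arbitrarily long complementary gaps, so cannot be thick. This is precisely the one-line adaptation of the previous proof, and writing it in that order (choose $U,V$ disjoint with $0\in V$, $x\in U$; for arbitrary $s$ pick $W$ with $T^j(W)\subset V$, $j\le s$; pick $n$ with $T^nx\in W$; conclude $\{n,\dots,n+s\}\cap N(x,U)=\emptyset$) is the cleanest route.
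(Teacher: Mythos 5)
There is a genuine gap here, and it comes from conflating the two dual notions of largeness. A set has upper Banach density $1$ exactly when it is \emph{thick}, i.e.\ contains arbitrarily long intervals; the negation of this is that the lengths of the intervals it contains are bounded, \emph{not} that it has arbitrarily long gaps. Your closing argument (choose $V\ni 0$ disjoint from $U$, take $W$ with $T^j(W)\subset V$ for $0\le j\le s$, and $n$ with $T^nx\in W$) produces, for each $s$, one gap of length $s$ somewhere in $N(x,U)$. That shows $\underline{Bd}(N(x,U))=0$ --- it is essentially a re-proof of Proposition~\ref{prop.syndetic} --- but it is perfectly compatible with $\overline{Bd}(N(x,U))=1$, since a set can contain arbitrarily long intervals and arbitrarily long gaps simultaneously (e.g.\ $\bigcup_k[2^{2k},2^{2k+1}]$). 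So no contradiction is reached. Moreover, the claim you would actually need, namely that for a ball $U=B(y,\eps)$ with $0\notin\overline{U}$ the set $N(x,U)$ contains no arbitrarily long intervals, is false for an arbitrary such $y$: if $y$ is a non-zero fixed point of $T$ (e.g.\ $y=(1,2^{-1},2^{-2},\dots)$ for $T=2B$ on $\ell^2$, which is frequently hypercyclic), then continuity of the $T^j$ at $y$ forces $N(x,B(y,\eps))$ to be thick for every hypercyclic vector $x$. So the target set must be chosen using the dynamics of $T$, not merely its distance to $0$.

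The paper's proof sidesteps all of this by extracting only the weakest consequence of thickness: for every non-empty open $U$ there is some $n$ with $n,n+1\in N(x,U)$, whence $T(U)\cap U\ne\emptyset$ for \emph{every} non-empty open set $U$. This is then refuted by picking any $z$ with $Tz=y\ne z$ (possible since the identity is not hypercyclic), disjoint neighbourhoods $V_z$ of $z$ and $V_y$ of $y$, and the non-empty open set $\hat{V_z}=V_z\cap T^{-1}(V_y)$, which satisfies $T(\hat{V_z})\cap\hat{V_z}=\emptyset$. If you want to salvage your argument, this is the repair: replace ``$U$ bounded away from $0$'' by ``$U$ a neighbourhood of a non-fixed point $z$ small enough that $U\cap T(U)=\emptyset$'', after which even two consecutive elements of $N(x,U)$ --- let alone a long interval --- are impossible.
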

\begin{proof}
 Assume towards a contradiction that there exists an $\A$-frequently hypercyclic operator $T$ on $X$. Let $x\in X$ be an $\A$-frequently hypercyclic operator. This implies that for every non-empty open set $U$, there exists $n$ such that $n, n+1\in N(x, U)$ and thus for every non-empty open set $U$, we have
 \begin{equation}
 \label{eq.TU}
 T(U)\cap U \neq\emptyset.
 \end{equation}
Let $z\ne y\in X$ such that $Tz=y$. Since $X$ is metrizable, there exists open neighborhoods $V_y$, $V_z$ of $y$ and $z$ respectively, such that $V_y\cap V_z=\emptyset$. On the other hand, by continuity of $T$ there exists an open neighborhood of $z$ denoted $\tilde{V_z}$, such that $T(\tilde{V_z})\subset V_y$. Let $\hat{V_z}=V_z\cap \tilde{V_z}$. We get that $\hat{V_z}$ is a non-empty open set and since $T(\hat{V_z})\subset V_y$, we conclude that $T(\hat{V_z})\cap \hat{V_z}=\emptyset$. This is a contradiction with condition \eqref{eq.TU}.
\end{proof}

We deduce from Proposition \ref{prop.syndetic}, Proposition \ref{prop.thick} and \eqref{eq.density} that there are only three possibilities:

\begin{enumerate}
\item $T$ is frequently hypercyclic i.e. $\underline{d}(N(x,U))>0$;
\item $T$ is $\mathfrak{U}$-frequently hypercyclic i.e. $\overline{d}(N(x,U))>0$;
\item  $T$ is reiteratively hypercyclic i.e. $\overline{Bd}(N(x,U))>0$.
\end{enumerate}

Obviously, for any operator $T$, we have
\[\text{frequently hypercyclic} \Rightarrow \text{$\mathfrak{U}$-frequently hypercyclic} \Rightarrow \text{reiteratively hypercyclic}.\]
In the case of weighted shifts on $\ell^p$, we have even an equivalence between frequent hypercyclicity and $\mathfrak{U}$-frequent hypercyclicity~\cite{Bayart2}. Nevertheless, this equivalence is false in general and in particular for weighted shifts on $c_0$~\cite{Bayart2}. Thanks to weighted shifts on $c_0$, we can also show that there is not, in general, an equivalence between reiterative hypercyclicity  and $\mathfrak{U}$-frequent hypercyclicity.

\begin{theorem}\label{counterexc0}
There exists some reiteratively hypercyclic weighted shift on $c_0(\Z_+)$ that is not $\mathfrak{U}$-frequently hypercyclic.
\end{theorem}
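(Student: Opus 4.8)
The plan is to construct an explicit weight sequence $w=(w_\nu)_{\nu\ge 1}$ and use the characterization of $\A$-frequently hypercyclic weighted shifts (Theorem~\ref{characshift}) to show, on the one hand, that $B_w$ is reiteratively hypercyclic (take $\A$ to be the family of sets of positive upper Banach density) and, on the other hand, that $B_w$ is not $\mathfrak{U}$-frequently hypercyclic (take $\A$ to be the family of sets of positive upper density, and show condition 3) of Theorem~\ref{characshift} fails). Since $\mathfrak{U}$-frequent hypercyclicity is exactly $\A$-frequent hypercyclicity for the upper-density family and this is equivalent to condition 3) there, it suffices to exhibit weights for which the combinatorial/analytic conditions i.--iii. are satisfiable by some $(A_k)$ of positive \emph{upper Banach} density but by no $(A_k)$ of positive \emph{upper} density.

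The key point is that on $c_0(\Z_+)$ the summability condition ii.\ reads $\sup_{n\in A_k+k'}\frac{1}{\prod_{\nu=1}^n|w_\nu|}\to 0$ (as $k\to\infty$), i.e.\ the obstruction is controlled by $\prod_{\nu=1}^n|w_\nu|$ being large \emph{along $A_k$}, not everywhere. So I would choose weights $w_\nu$ equal to some fixed constant $>1$ except on a sparse sequence of blocks where they drop back close to $1$, arranged so that $\prod_{\nu=1}^n|w_\nu|$ is large only on a set $S$ of integers that has positive upper Banach density (it contains arbitrarily long intervals) but zero upper density (those intervals are spaced more and more sparsely). Concretely, one lets the good set $S=\bigcup_m I_m$ where $I_m$ is an interval of length $\sim m$ centered far out, with the gaps between consecutive $I_m$ growing fast enough that $|S\cap[0,N]|/N\to 0$. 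Then any admissible $(A_k)$ must essentially live inside $S$ (up to the shifts by $k'\le k$, which only thicken $S$ by a bounded amount on each interval), forcing each $A_k$, hence $\bigcup_k A_k$, into a set of zero upper density; this kills $\mathfrak{U}$-frequent hypercyclicity via the ``$1)\Leftrightarrow 3)$'' part of Theorem~\ref{characshift}. Conversely, inside the long intervals $I_m$ one has enough room to build disjoint sets $A_k$ satisfying the gap condition~i.\ (gaps $\ge\max\{k,k'\}$), the decay in~ii., and the double decay in~iii.\ (with suitable constants $C_{k,l}$), so that condition~3) holds for the upper Banach density family, giving reiterative hypercyclicity.

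I would carry this out in the following order. First, fix the block structure: choose rapidly increasing integers $(N_m)$, let $I_m=[N_m,N_m+m]$, set $w_\nu=2$ for $\nu\in\bigcup_m I_m$ and $w_\nu$ close to $1$ (chosen so that the running product $\prod_{\nu=1}^n|w_\nu|$ returns near a fixed baseline once the block is over) otherwise; verify that $\prod_{\nu=1}^n|w_\nu|\to\infty$ along $S$ only, and that $\udens(S)=0$ while $\overline{Bd}(S)=1$ (or at least $>0$). Second, prove the negative part: any sequence $(A_k)$ as in 3) for the upper-density family would have $A_k+k'\subset S'$ for a set $S'$ still of zero upper density (because ii.\ forces $\prod_{\nu=1}^n|w_\nu|$ to be bounded below along $A_k+k'$, and a fixed lower bound confines $n$ to a set of zero upper density once $k$ is large; handle small $k$ separately since finitely many $A_k$ don't affect upper density), contradicting $A_k\in\A$. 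Third, prove the positive part: working inside the intervals $I_m$ for large $m$, place $A_k$ as an arithmetic-progression-like subset of the $m$-th block for a well-chosen correspondence $k\leftrightarrow m$, with step $\ge 2k$ to get~i., with $\sup_{n\in A_k}\frac1{\prod|w_\nu|}$ small to get~ii., and with the tail sums in~iii.\ geometrically small because the weights are $2$ on the block; check that $\bigcup_k A_k$ meets arbitrarily long intervals so has positive upper Banach density. The main obstacle I expect is the bookkeeping in the positive part — simultaneously meeting the uniform unconditional-convergence decay in~ii., the \emph{two} separate limits in~iii.\ ($\sup_l C_{k,l}\to 0$ as $k\to\infty$ and $C_{k,l}\to 0$ as $l\to\infty$), and the spacing condition~i., all while keeping $A_k$ inside a single short block $I_m$ of length only $\sim m$; this forces a careful quantitative choice of how $k$, $l$, $m$, $N_m$, and the ``near-$1$'' weight values interlock, and is where the real work lies.
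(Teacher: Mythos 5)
Your overall strategy --- apply Theorem~\ref{characshift} in both directions, with weights equal to $2$ on a distinguished set $S$ arranged so that the running products $\prod_{\nu=1}^n|w_\nu|$ are large only along $S$ --- is indeed the paper's strategy, but the specific design you propose (take $S$ a union of isolated blocks $I_m$ so sparse that $\overline{d}(S)=0$, with the product resetting to a baseline after each block) cannot be completed: your negative part and your positive part contradict each other. In $c_0$, condition iii.\ of Theorem~\ref{characshift} with $l=k$ and $k'=0$ says that for every $j\in A_k$ and every $n\in(A_k-j)\cap\N$ one must have $\prod_{\nu=1}^n|w_\nu|\ge 1/C_{k,k}$, i.e.\ the running product \emph{near the origin} must be large at every positive element of the difference set $A_k-A_k$. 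Moreover each $A_k$ separately must belong to $\A$, i.e.\ have positive upper Banach density (a set contained in a single block $I_m$ is finite and has Banach density $0$, so your ``correspondence $k\leftrightarrow m$'' and your final check that only $\bigcup_k A_k$ meets long intervals are not enough; each $k$ needs infinitely many blocks with pieces of growing length). But the difference set of a set of positive upper Banach density is syndetic (Furstenberg, exactly as used in Proposition~\ref{wm}), so for every large $k$ the level set $\{n\ge 1:\prod_{\nu=1}^n|w_\nu|\ge 1/C_{k,k}\}$ contains a syndetic set and hence has positive lower, in particular positive upper, density --- and $1/C_{k,k}\to\infty$. This is flatly incompatible with your premise that the product is large only on a set of zero upper density; put differently, your negative argument proves too much, since it would equally exclude the very sets $A_k$ you need for reiterative hypercyclicity.

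The correct quantitative picture, and what the paper actually does, is that for every threshold $M$ the level set $\{n:\prod_{\nu=1}^n w_\nu\ge M\}$ has positive upper density, but this density tends to $0$ as $M\to\infty$; that already kills $\mathfrak{U}$-frequent hypercyclicity, because a set $A$ of positive upper density along which $\prod_{\nu=1}^n w_\nu\to\infty$ is eventually contained in every level set. To make condition iii.\ achievable the large-product set must be multi-scale rather than a sparse union of blocks: the paper takes $S=\bigcup_{j,l\ge 1}\,]l10^j-j,l10^j+j[$, places the pieces of $A_k$ at positions $10^{j_0}$ (with $j_0$ enormous compared to everything previously built) with step $10^{2k}$, so that every relevant difference --- within a piece, across pieces, or against $A_l$ --- lands deep inside some run of $S$, where the product is at least $2^{k}$ (resp.\ $2^{k+l}$), yielding $C_{k,k}=2^{-k}$, $C_{k,l}=2^{-(k+l)}$, while $\overline{Bd}(A_k)\ge 10^{-2k}>0$. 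The price is that the negative half is then genuinely nontrivial: one must prove $\overline{d}\{n:\prod_{\nu=1}^n w_\nu\ge 2^j\}\to 0$ as $j\to\infty$, which in the paper requires a careful combinatorial analysis of $S$ (its Fact~1) plus a counting estimate over scales. This tension --- level sets that are syndetic yet of shrinking density --- is the real content of the theorem, and it is exactly what your sketch assumes away.
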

\begin{proof}
Let $S:=\bigcup_{j,l\ge 1}]l 10^j-j,l 10^j+j[$.
We consider the weighted sequence $w$ given by
\[w_k=\left\{
\begin{array}{cl} \displaystyle{2} & \quad\mbox{if $k\in S$}\\
 {\displaystyle \prod_{\nu=1}^{k-1}w_{\nu}^{-1}} & \quad\mbox{if $k\in (S+1)\backslash S$}\\
  1 & \quad\mbox{otherwise}.
\end{array}\right.\]
In particular, we deduce from the definition that $\prod_{\nu=1}^nw_{\nu}=1$ if and only if $n\notin S$, and $\prod_{\nu=1}^nw_{\nu}\ge 2^{j}$ if and only if $]n-j,n]\subset S$.

We first show that $B_w$ is reiteratively hypercyclic by using Theorem~\ref{characshift}.
To this end, we have to construct a sequence $(A_k)_{k\ge 1}$ of disjoints sets with positive upper Banach density such that
\begin{enumerate}
\item[i.] for any $j\in A_k$, any $j'\in A_{k'}$, $j\ne j'$, we have $|j'-j|\ge \max\{k,k'\}$.
\item[ii.] for any $k'\ge 0$, any ${k> k'}$,
\[
\prod_{\nu=1}^nw_{\nu}\xrightarrow{n\in A_k+k'} \infty\quad \text{and}
\quad \sup_{n\in A_k+k'}\frac{1}{\prod_{\nu=1}^n w_\nu}\xrightarrow[]{k\to \infty}0;
\]
\item[iii.] there exists a family $(C_{k,l})_{k,l\ge 1}$ such that for any $k'\ge 0$, any ${k> k'}$, any $l\ge 1$,
\[
\sup_{j\in A_l}\sup_{n\in A_k-j}\frac{1}{\prod_{\nu=1}^nw_{\nu+k'}}\le C_{k,l}
\]
and such that $\sup_l C_{k,l}$ converges to $0$ when $k\rightarrow \infty$ and, for any $k\ge 0$, $C_{k,l}$ converges to $0$ when $l\rightarrow \infty$.
\end{enumerate}

Let $\phi: \mathbb{N}\to \mathbb{N}$ such that for any $k\ge 1$, we have
${\#\{j\ge 1:\phi(j)=k\}=\infty}$. We construct a sequence of sets $(F_{j})_{j\ge 1}$ such that
if $A_k:=\bigcup_{\phi(j)=k} F_j$, then the sequence $(A_k)_{k\ge 1}$ satisfies the desired properties.

Let $F_{0}=\{0\}$. If we have already constructed $F_1,\cdots,F_j$ and $\phi(j+1)=k$, then
we let $F_{j+1}=\{10^{j_0}+10^{2k}l:l\in [0,l_0[\}$ where $j_0$ and $l_0$ are positive integers satisfying
\begin{enumerate}
\item[1)] $l_0\ge j+1$;
\item[2)] $10^{j_0}\ge k+\max_{1\le n \le j}\phi(n)+\max\Big(\bigcup_{n=0}^{j}F_n\Big)$;
\item[3)] $j_0\ge j+1$ and $j_0-k>10^{2k}l_0$;
\item[4)] $j_0> \max\Big(\bigcup_{n=0}^{j}F_n\Big)+\max_{1\le n \le j}\phi(n)+2k$.
\end{enumerate}

Let $A_k:=\bigcup_{\phi(j)=k} F_j$.
We first remark that for any real number $s\ge 1$, any $k\ge 1$,
\begin{align*}
\alpha_k^s:=\limsup_{l\to \infty}|A_k\cap[l+1,l+s]|&\ge \limsup_{j\to \infty,\phi(j)=k}|F_j\cap[\min F_j+1,\min F_j+s]|\\
&\ge \limsup_{j\to \infty,\phi(j)=k} \min\left\{\left\lfloor\frac{s}{10^{2k}}\right\rfloor,j\right\} \quad \text{by 1)}\\
&=\left\lfloor\frac{s}{10^{2k}}\right\rfloor.
\end{align*}
We deduce that for any $k\ge 1$
\[\overline{Bd}(A_k):=\lim_{s\to \infty}\frac{\alpha_k^s}{s}\ge\lim_{s\to \infty}\frac{\left\lfloor\frac{s}{10^{2k}}\right\rfloor}{s}=\frac{1}{10^{2k}}>0.\]

Moreover, the sets $A_k$ are disjoint since it follows from~2) that the sets $F_j$ are disjoint.
In fact, 2) implies that for any $j\in A_k$, any $j'\in A_{k'}$, $j\ne j'$, we have $|j'-j|\ge \max\{k,k'\}$. Indeed if $j\in A_k$, there exists $n\ge 1$ such that $j\in F_n$ and $\phi(n)=k$, and if $j'\in A_{k'}$, there exists $n'\ge 1$ such that $j'\in F_{n'}$ and $\phi(n')=k'$. Therefore, if $n=n'$, we have $k=k'$ and $|j-j'|$ is a non-zero multiple of $10^{2k}\ge k$. On the other hand, if $n\ne n'$,  we can assume without loss of generality that $n'< n$ and we deduce from 2) that
\[|j-j'|\ge k+\max_{1\le m< n}\phi(m)\ge k+\phi(n')=k+k'\ge \max\{k,k'\}.\]

It remains to prove that the sets $A_k$ satisfy Condition ii. and iii. of Theorem~\ref{characshift}.
Condition ii. is satisfied because for any $k\ge 1$, any $j\ge 1$ with $\phi(j)=k$, it follows from~3) that
$F_j\subset [10^{j_0},10^{j_0}+j_0-k[$ for some $j_0\ge j$. Therefore, for any $k'<k$, we have $F_j+k'\subset [10^{j_0},10^{j_0}+j_0[$ and thus, by definition of $w$, for any $n\in F_j+k'$, we have
\[\prod_{\nu=1}^nw_{\nu}\ge 2^{j_0}\ge 2^j.\]
Since each $F_j$ is finite, we then get, by definition of $A_k$, for any $k'<k$
\[\prod_{\nu=1}^nw_{\nu}\xrightarrow{n\in A_k+k'} \infty\]
and since $\min( \phi^{-1}(\{k\}))\to \infty$ as $k\to \infty$, we get
\[\sup_{n\in A_k+k'}\frac{1}{\prod_{\nu=1}^nw_{\nu}}\xrightarrow[]{k\to \infty}0.\]

Finally, for any $k'\ge 0$, any ${k> k'}$, any $j\ge 1$ such that $\phi(j)=k$, if $n=m-m'\ge 1$ with $m\in F_j$ and $m'\in F_{j'}$, we have two possibilities either $j=j'$ or $j>j'$.
If $j=j'$, we deduce from the definition of $F_j$ that $n=l 10^{2k}$ for some $l\ge 1$ and thus
\[\prod_{\nu=1}^nw_{\nu+k'}= \frac{\prod_{\nu=1}^{n+k'}w_{\nu}}{\prod_{\nu=1}^{k'}w_{\nu}}\ge \frac{2^{2k}}{2^{k'}}\ge 2^{k}.\]
On the other hand, if $j>j'$, we deduce from 4) that \[n\in [10^{j_0}+2k+\phi(j')-j_0,10^{j_0}+j_0-k[\] for some $j_0\ge 1$ and thus
$n+k'\in [10^{j_0}+2k+\phi(j')-j_0,10^{j_0}+j_0[$. We conclude that
\[\prod_{\nu=1}^nw_{\nu+k'}= \frac{\prod_{\nu=1}^{n+k'}w_{\nu}}{\prod_{\nu=1}^{k'}w_{\nu}}\ge \frac{2^{2k+\phi(j')}}{2^{k'}}\ge 2^{k+\phi(j')}.\]

Let  $C_{k,k}:=2^{-k}$ and $C_{k,l}:=2^{-(k+l)}$ if $k\ne l$.
We deduce that for any $k'\ge 0$, any ${k> k'}$, any $l\ge 1$, we have
\[\sup_{m'\in A_l}\max_{n\in A_k-m'}\frac{1}{\prod_{\nu=1}^nw_{\nu+k'}}\le C_{k,l}\]
and we remark that $\sup_l C_{k,l}=2^{-k}$ converges to $0$ when $k\rightarrow \infty$ and, for any $k\ge 0$, $C_{k,l}$ converges to $0$ when $l\rightarrow \infty$. Condition iii. of Theorem~\ref{characshift} is thus satisfied and we conclude that $B_w$ is reiteratively hypercyclic.\\

We now show that $B_w$ is not $\mathfrak{U}$-frequently hypercyclic. Assume that $B_w$ is $\mathfrak{U}$-frequently hypercyclic. We then deduce from Theorem~\ref{characshift} that there exists a set $A$ with positive upper density such that \[\sum_{n\in A}\frac{e_{n}}{\prod_{\nu=1}^nw_{\nu}}\in c_0.\]
In other words, we have $\overline{d}(A)>0$ and
\[ \prod_{\nu=1}^nw_{\nu}\xrightarrow{n\in A} \infty.\]
Let $D_j:=\{n\ge 1:\prod_{\nu=1}^n w_{\nu}\ge 2^j\}$. We remark that $\overline{d}(A)\le \overline{d}(D_j)$. In order to prove that  $B_w$ is not $\mathfrak{U}$-frequently hypercyclic, it is thus sufficient to prove that $\overline{d}(D_j)\to 0$ when $j\to \infty$. To this end, we will need the following fact.\\

\noindent\textbf{Fact 1.} \emph{Let $S:=\bigcup_{j,l\ge 1}]l 10^j-j,l 10^j+j[$. Let $k\ge 1$, $l\ge 1$ and $n\ge 0$ such that $10^{n-1}<k\le 10^n$.
If $m\in\{l10^k+\sum_{j=0}^{n}10^j,l10^k-\sum_{j=0}^{n}10^j\}$, then
\begin{equation*}
\text{either } m\notin S \text{ or }m\in ]l_0 10^{j_0}-j_0,l_010^{j_0}+j_0[ \text{ for some $j_0>k$.}
\end{equation*}}

\noindent Thanks to Fact 1, we can show that
\[D_j:=\{n\ge 1:\prod_{\nu=1}^n w_{\nu}\ge 2^j\}\subseteq \bigcup_{k\ge \lceil\frac{j}{30}\rceil}\bigcup_{l\ge 1}\ ]l10^k-31k,l10^k+31k[:=E_j.\]
Let $n\notin E_j$. We want to show that $n\notin D_j$. In other words, we have to show that
\[]n-j,n]\cap S^c\ne \emptyset.\]
We first remark that $]n-j,n]\cap ]l10^k-k,l10^k+k[=\emptyset$ for any $k\ge \lceil\frac{j}{30}\rceil$ and any $l\ge 1$. Indeed, if $]n-j,n]\cap ]l10^k-k,l10^k+k[\ne\emptyset$, then $n\ge l10^k-k$ and $n-j<l10^k+k$. Hence, \[l10^k-31k<n=(n-j)+j<l10^k+k+j\le l10^k+31k\]
and thus $n\in E_j$, which is a contradiction.

Assume that $]n-j,n]\subset S$ and let
\[k_0:=\max\left\{k\ge 1:]n-j,n]\cap \bigcup_{l\ge 1}]l10^k-k,l10^k+k[\ne \emptyset\right\}.\] We deduce from the above reasoning that $k_0< \lceil\frac{j}{30}\rceil$.  We consider $m\in ]n-j,n]$ and $l_0\ge 1$ such that \[m\in ]l_010^{k_0}-k_0,l_010^{k_0}+k_0[.\]  Let $n\ge 0$ such that $10^{n-1}<k_0\le 10^n$, $m_1:=l_010^{k_0}-\sum_{j=0}^{n}10^j$ and $m_2:=l_010^{k_0}+\sum_{j=0}^{n}10^j$. Since $m\in ]n-j,n]$, $m\in [m_1,m_2]$ and
\[m_2-m_1+1\le 3 (10)^{n}<30 k_0\le j,\] we deduce that either $m_1\in ]n-j,n]$ or $m_2\in ]n-j,n]$. Therefore $m_1$ or $m_2$ belongs to $S$ and we deduce from Fact 1 that $m_1$ or $m_2$ belongs to $]l_110^{k_1}-k_1,l_110^{k_1}+k_1[$ for some $k_1>k_0$ and some $l_1 \ge 1$ which is a contradiction with the definition of $k_0$.

If $N\in [10^m,10^{m+1}[$ with $m\ge 1$, we thus deduce that
\begin{align*}
\frac{\#(D_j\cap [1,N])}{N}&\le
\frac{\#(D_j\cap [1,10^{m+1}])}{10^m}\\
&\le \frac{\#(E_j\cap [1,10^{m+1}])}{10^{m}}
\le \frac{\sum_{k=\lceil \frac{j}{30} \rceil}^{m}62k10^{m+1-k}}{10^{m}}\\
&= \frac{620}{81}\left(\frac{(9\lceil \frac{j}{30} \rceil+1)10^{m+1-\lceil \frac{j}{30} \rceil}-9m-10}{10^{m}}\right)\\
&\le 8\left(9\left\lceil \frac{j}{30} \right\rceil+1\right)10^{1-\lceil \frac{j}{30} \rceil}.
\end{align*}
where the third inequality follows from the fact that for any $\lceil \frac{j}{30} \rceil\le k\le m-1$, there is less than $10^{m+1-k}$ intervals of the form $]l10^{k}-31k,l10^{k}+31k[$ with $l\ge 1$ in $E_j\cap [1,10^{m+1}]$, $9$ intervals of the form
$]l10^{m}-31m,l10^{m}+31m[$ and the interval $]10^{m+1}-31(m+1),10^{m+1}]$. We deduce that $\overline{d}(D_j)\to 0$ as $j\to \infty$ and we thus conclude that $B_w$ is not $\mathfrak{U}$-frequently hypercyclic.\\

We finish this proof by giving the proof of Fact 1.\\
\noindent\textbf{Proof of Fact 1.}
We first assume that $m=l10^k+\sum_{j=0}^{n}10^j$. If $m\in S$, then there exists $l_0\ge 1$ and $j_0\ge 1$ such that $m\in  ]l_0 10^{j_0}-j_0,l_010^{j_0}+j_0[$. Therefore, it suffices to prove that
 $m\notin ]l_1 10^{j_1}-j_1,l_110^{j_1}+j_1[$ for any $1\le j_1\le k$, any $l_1\ge 1$.

If $n+1\le j_1\le k$, then $l 10^{k-j_1}10^{j_1} \le m \le (l 10^{k-j_1}+1)10^{j_1}$ and we remark that
\[m\ge l 10^{k}+10^n \ge l 10^{k-j_1}10^{j_1}+j_1\]
and
\[m\le l 10^{k}+10^{j_1}-10^{j_1-1}\le (l 10^{k-j_1}+1)10^{j_1}-j_1.\]
Hence $m\notin ]l_1 10^{j_1}-j_1,l_110^{j_1}+j_1[$ for any $n+1\le j_1\le k$ and any $l_1\ge 1$.
On the other hand, if $1\le j_1\le n$ then
\[(l10^{k-j_1}+\sum_{j=j_1}^{n}10^{j-j_1})10^{j_1}\le m\le (l10^{k-j_1}+\sum_{j=j_1}^{n}10^{j-j_1}+1)10^{j_1}.\]
However,
\[m\ge l10^k+\sum_{j=j_1}^{n}10^{j}+10^{j_1-1}\ge (l10^{k-j_1}+\sum_{j=j_1}^{n}10^{j-j_1})10^{j_1}+j_1\]
and
\[m\le l10^k+\sum_{j=j_1}^{n}10^{j}+10^{j_1}-10^{j_1-1}\le(l10^{k-j_1}+\sum_{j=j_1}^{n}10^{j-j_1}+1)10^{j_1}-j_1.\]
So we conclude that $m\notin ]l_1 10^{j_1}-j_1,l_110^{j_1}+j_1[$ for any $1\le j_1\le k$ and any $l_1\ge 1$.\\

If we assume that $m=l10^k-\sum_{j=0}^{n}10^j$ and $m\in S$, we can show similarly that  $m\notin ]l_1 10^{j_1}-j_1,l_110^{j_1}+j_1[$ for any $1\le j_1\le k$ and any $l_1\ge 1$. This concludes the proof.

%

\end{proof}

%

%
%
%

In the case of weighted shifts on $\ell^p(\Z)$ or $\ell^p(\Z_+)$, we can generalize the equivalence obtained by Bayart and Ruzsa~\cite{Bayart2} by showing that every reiteratively hypercyclic weighted shift on $\ell^p(\Z)$ or $\ell^p(\Z_+)$ is frequently hypercyclic. The proof of the characterization obtained by Bayart and Ruzsa is based on the fact that if $A$ is a set with upper density $\overline{d}(A)=\delta>0$ and $\delta_k=\overline{d}(A\cap(A-k))$, then the set $F=\{k:\delta_k>(1-\varepsilon)\delta^2\}$ is syndetic~\cite[Theorem 8]{Bayart2}. We  remark that this result can be extended to sets with positive upper Banach density thanks to following two Theorems.
\begin{theorem}\emph{(Furstenberg Correspondence principle~\cite[Theorem 0.2 ]{PoSc})}
\label{FCP}
Given a subset $A\subset \Z$ of positive upper Banach density, there exists a measure-preserving system $(X, \mathfrak{B}, \mu, T)$ and a set $E\in \mathfrak{B}$ such that $\mu(E)=\overline{Bd}(A)$ and
\[
\overline{Bd}\big(A\cap (A-n_1)\cap \dots \cap (A-n_k) \big)\geq \mu \big(E\cap T^{-n_1}E\cap \dots \cap T^{-n_k}E\big)
\]
for any integer $n_1, \dots, n_k$.
\end{theorem}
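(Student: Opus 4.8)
The plan is to invoke the classical Furstenberg correspondence, realizing $A$ through the orbit of a single point in a symbolic dynamical system. Put $X=\{0,1\}^{\Z}$ with the product topology, so that $X$ is a compact metrizable space; let $T\colon X\to X$ be the two-sided shift $(Tx)(j)=x(j+1)$, which is a homeomorphism, so that $T^{-n}$ is defined for every $n\in\Z$; let $\xi=\mathbf 1_A\in X$; and let $E=\{x\in X:x(0)=1\}$, which is clopen. The argument hinges on the elementary identity: for $m\in\Z$ and integers $n_1,\dots,n_k$,
\[
T^m\xi\in E\cap T^{-n_1}E\cap\dots\cap T^{-n_k}E
\iff \xi(m)=\xi(m+n_1)=\dots=\xi(m+n_k)=1
\iff m\in B,
\]
where $B:=A\cap(A-n_1)\cap\dots\cap(A-n_k)$; in particular $T^m\xi\in E\iff m\in A$.

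To produce the invariant measure I would use averaging and compactness. Writing $\beta^s=\sup_{k\ge 0}|A\cap[k+1,k+s]|$, the sequence $(\beta^s)_s$ is subadditive, so Fekete's lemma gives $\lim_s\beta^s/s=\inf_s\beta^s/s$, and this limit equals $\overline{Bd}(A)$ (a standard fact, closely related to the well-definedness recalled after Definition~\ref{definition.density}). Hence one may choose intervals $I_m\subset\Z$ with $|I_m|\to\infty$ and $|A\cap I_m|/|I_m|\to\overline{Bd}(A)$ (take $I_m$ of length $s_m\to\infty$ with $|A\cap I_m|\ge\beta^{s_m}-1$). Form the empirical measures $\nu_m=\frac1{|I_m|}\sum_{n\in I_m}\delta_{T^n\xi}$ on $X$; by weak-$*$ compactness of the space of Borel probability measures on the compact space $X$, pass to a subsequence along which $\nu_m\to\mu$ weak-$*$. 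For any continuous $f\colon X\to\C$, the difference $\int f\circ T\,d\nu_m-\int f\,d\nu_m$ is a single telescoped boundary term, bounded in modulus by $2\|f\|_\infty/|I_m|\to0$; therefore $\mu$ is $T$-invariant. Taking $\mathfrak B$ to be the completed Borel $\sigma$-algebra, $(X,\mathfrak B,\mu,T)$ is a measure-preserving system.

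Finally, I would read off both conclusions from the fact that $E$ and every set $F:=E\cap T^{-n_1}E\cap\dots\cap T^{-n_k}E$ are clopen, hence their indicator functions are continuous, so weak-$*$ convergence yields $\mu(F)=\lim_m\nu_m(F)$. Applied to $E$ this gives $\mu(E)=\lim_m|A\cap I_m|/|I_m|=\overline{Bd}(A)$, by the choice of the $I_m$. For a general such $F$, the identity above gives $\nu_m(F)=|B\cap I_m|/|I_m|$, and since $|B\cap I_m|/|I_m|\le\beta^{|I_m|}_B/|I_m|\to\overline{Bd}(B)$ — where $\beta^s_B=\sup_k|B\cap[k+1,k+s]|$ — we get $\mu(F)=\lim_m|B\cap I_m|/|I_m|\le\overline{Bd}(B)$, which is exactly the asserted inequality. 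Because $X$, $E$ and $\mu$ are fixed once and for all, this inequality holds simultaneously for all tuples $(n_1,\dots,n_k)$.

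The delicate points are the $T$-invariance of the limiting measure — this is precisely why one averages over intervals of growing length and not over arbitrary finite sets, the boundary contributions being the only obstruction — and the two elementary comparisons $|B\cap I|/|I|\le\beta^{|I|}_B/|I|$ and $\lim_s\beta^s_B/s=\overline{Bd}(B)$, which together reduce the ``density of the limit set'' side of the estimate to the already-known good behaviour of Banach density along long intervals. The remaining verifications are routine bookkeeping.
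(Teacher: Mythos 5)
Your argument is correct: it is the classical proof of the Furstenberg correspondence principle (orbit of $\mathbf 1_A$ in the full shift on $\{0,1\}^{\Z}$, empirical measures along intervals realizing the upper Banach density, a weak-$*$ limit which is shift-invariant because the interval averages telescope, and clopenness of the cylinder sets $E\cap T^{-n_1}E\cap\dots\cap T^{-n_k}E$ to pass densities to the limit). Note that the paper does not prove this statement at all; it is quoted from the literature (Pollicott--Schmidt), so there is no in-paper argument to compare with. The only ingredient you use without proof is that $\overline{Bd}(A)=\lim_s \sup_k |A\cap[k+1,k+s]|/s$, i.e.\ that the sup-over-windows form of the upper Banach density agrees in the limit with the $\limsup$ form of Definition~\ref{definition.density}; this is indeed standard (cover a long window by length-$s$ windows beyond a threshold plus a bounded initial piece), and you need it both to get $\mu(E)=\overline{Bd}(A)$ and to bound $\mu(F)$ by $\overline{Bd}(B)$, so it would be worth one line of justification.
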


\begin{theorem}\emph{(\cite[Theorem 3.1]{BD})}
\label{BDtheor}
For every measure-preserving system $(X, \mathfrak{B}, \mu, T)$, any $\varepsilon >0$ and $A\in \mathfrak{B}$ the set $\{n\in \Z: \mu(A\cap T^{-n}A)>\mu (A)^2-\varepsilon \}$ is syndetic.
\end{theorem}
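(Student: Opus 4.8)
This is the classical Khintchine recurrence theorem, and I would prove it by the spectral theorem. Write $f=\mathbf{1}_A\in L^2(\mu)$, let $U$ be the Koopman isometry $Ug=g\circ T$, and set $c(n)=\mu(A\cap T^{-n}A)=\langle U^nf,f\rangle$ for $n\ge 0$, extended by $c(-n):=c(n)$. The plan has three steps: (i) represent $c$ as the Fourier transform of a positive measure on the circle; (ii) show that this measure carries an atom at the identity of mass at least $\mu(A)^2$; (iii) damp the remaining part \emph{uniformly in $n$}, which is precisely the step that yields syndeticity rather than mere positive density.

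For (i), note that $(c(n))_{n\in\Z}$ is positive definite: for finitely many scalars $z_i$ one has $\sum_{i,j}c(i-j)z_i\bar z_j=\bigl\|\sum_i z_iU^if\bigr\|^2\ge 0$, using that $U$ is an isometry and that $c$ is real and even. By the Herglotz--Bochner theorem there is a finite positive Borel measure $\sigma$ on $\T=\R/\Z$ with $c(n)=\int_\T e^{2\pi i nt}\,d\sigma(t)$ and $\sigma(\T)=c(0)=\mu(A)$. For (ii), averaging this identity over $n$ and using $\frac1N\sum_{n=0}^{N-1}e^{2\pi int}\to\mathbf{1}_{\{0\}}(t)$ boundedly, dominated convergence gives $\frac1N\sum_{n=0}^{N-1}c(n)\to\sigma(\{0\})$; on the other hand $\frac1N\sum_{n=0}^{N-1}c(n)=\int_A\frac1N\sum_{n=0}^{N-1}\mathbf{1}_A\circ T^n\,d\mu\to\int\mathbb{E}[\mathbf{1}_A\mid\mathcal{I}]^2\,d\mu\ge\bigl(\int\mathbf{1}_A\,d\mu\bigr)^2=\mu(A)^2$ by Birkhoff's theorem (with $\mathcal{I}$ the $T$-invariant $\sigma$-field) and Jensen's inequality. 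Hence $\sigma(\{0\})\ge\mu(A)^2$. Writing $\sigma=\sigma(\{0\})\,\delta_0+\nu$ with $\nu(\{0\})=0$, we obtain $c(n)=\sigma(\{0\})+\widehat\nu(n)\ge\mu(A)^2+\widehat\nu(n)$, where $\widehat\nu(n)=\int_\T e^{2\pi int}\,d\nu(t)=c(n)-\sigma(\{0\})$ is real and even. It therefore suffices to show that $\{n:\widehat\nu(n)>-\varepsilon\}$ is syndetic.

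For step (iii) --- the crux --- I would use that $\nu$ has no mass at $0$ to peel off a small arc and damp the rest with a Dirichlet kernel. Given $\varepsilon>0$, continuity from above yields an open arc $I$ around $0$, symmetric under $t\mapsto-t$, with $\nu(I)<\varepsilon/4$; set $\rho=\nu|_I$ and $\lambda=\nu|_{\T\setminus I}$. Then $|\widehat\rho(n)|\le\nu(I)<\varepsilon/4$ for every $n$, $\lambda$ is supported on $\{t:|e^{2\pi it}-1|\ge r\}$ for some $r>0$, and $\widehat\lambda$ is real by symmetry of $I$. For $t$ in the support of $\lambda$, $\bigl|\frac1H\sum_{h=0}^{H-1}e^{2\pi iht}\bigr|=\frac1H\bigl|\frac{e^{2\pi iHt}-1}{e^{2\pi it}-1}\bigr|\le\frac{2}{Hr}$, hence $\bigl|\frac1H\sum_{h=0}^{H-1}\widehat\lambda(n+h)\bigr|=\bigl|\int_\T e^{2\pi int}\,\frac1H\sum_{h=0}^{H-1}e^{2\pi iht}\,d\lambda(t)\bigr|\le\frac{2\mu(A)}{Hr}$ for every $n$. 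Fixing $H$ with $\frac{2\mu(A)}{Hr}<\varepsilon/4$, the real numbers $\widehat\lambda(n),\dots,\widehat\lambda(n+H-1)$ cannot all be $\le-\varepsilon/4$, so every window of $H$ consecutive integers contains an $m$ with $\widehat\lambda(m)>-\varepsilon/4$; on that set $\widehat\nu(m)=\widehat\rho(m)+\widehat\lambda(m)>-\varepsilon/2$, whence $c(m)\ge\mu(A)^2+\widehat\nu(m)>\mu(A)^2-\varepsilon$. Thus $\{n:\mu(A\cap T^{-n}A)>\mu(A)^2-\varepsilon\}$ has gaps at most $H$ and is syndetic. (When $T$ is non-invertible one runs this over $n\ge 0$; for invertible $T$ the identity $c(-n)=c(n)$, which follows from $T$-invariance, extends the conclusion to all of $\Z$.)

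The one genuinely delicate point is the last paragraph. The tempting alternative --- invoke Wiener's theorem to see $\widehat\nu(n)\to 0$ in density when $\nu$ is non-atomic, and a Bohr-set / simultaneous-approximation argument for its atoms --- does not suffice, because a set of density one need not meet a syndetic set in a syndetic set, so this gives no control on the gaps. The averaging against a Dirichlet kernel replaces a density statement by an estimate uniform in $n$ that handles the atomic and continuous parts of $\nu$ at once, and that uniformity is exactly what forces bounded gaps. Some routine care is also needed at the start to accommodate non-invertible $T$ and to justify the passage from the mean and pointwise ergodic limits to the inequality $\sigma(\{0\})\ge\mu(A)^2$.
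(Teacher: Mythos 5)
Your proof is correct. Note first that the paper itself offers no argument for this statement: it is quoted verbatim from Bergelson--Downarowicz \cite[Theorem 3.1]{BD}, so there is no internal proof to compare against; what you have supplied is a self-contained proof of the classical Khintchine-type recurrence theorem. Your route --- Herglotz representation of the positive-definite sequence $c(n)=\mu(A\cap T^{-n}A)$, the identification $\sigma(\{0\})=\lim_N\frac1N\sum_{n<N}c(n)\ge\mu(A)^2$, and then the splitting $\nu=\nu|_I+\nu|_{\T\setminus I}$ with the Dirichlet-kernel bound $\bigl|\frac1H\sum_{h<H}\widehat{\lambda}(n+h)\bigr|\le 2\mu(A)/(Hr)$ uniform in $n$ --- is sound, and you correctly identify that the uniformity in $n$ (not a density statement \`a la Wiener) is what yields bounded gaps; the handling of non-invertible $T$ via the isometry $U$ and the even extension of $c$ is also fine. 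Two small remarks: the realness of $\widehat{\lambda}$ needs the symmetry of $\nu$ under $t\mapsto -t$ (which follows since $c$ is real and even, hence $\sigma$ is symmetric and so is $\nu$, as $\delta_0$ is), not merely the symmetry of the arc $I$; alternatively one can simply work with real parts throughout, since $\widehat{\nu}(n)$ is real and $|\mathrm{Re}\,\widehat{\rho}(n)|\le\nu(I)$. For comparison, the proof most commonly found in the literature (and effectively the one behind the cited source) avoids spectral measures: by the uniform von Neumann mean ergodic theorem, $\frac1N\sum_{n=M}^{M+N-1}\langle U^nf,f\rangle\to\|Pf\|^2\ge\mu(A)^2$ uniformly in $M$, where $P$ is the projection onto invariant vectors, so every sufficiently long window contains an $n$ with $c(n)>\mu(A)^2-\varepsilon$; that argument is shorter, while yours makes the mechanism behind syndeticity (damping the spectral mass away from $0$) explicit and gives a concrete gap bound $H$ in terms of $\varepsilon$, $\mu(A)$ and the spectral measure near $0$.
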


We directly deduce from these theorems the following generalization of result of Bayart and Rusza.
\begin{theorem}
\label{variant.theor8.BaRu}
Let $A\subset \Z$ be a set with positive upper Banach density equal to $\delta$ and $\varepsilon\in ]0, 1[$. For any $k\in \Z$, let $B_k=A\cap (A-k)$ with upper Banach density equal to $\delta_k$. Then the set $\{k\in \Z: \delta_k>\delta^2-\varepsilon\}$ is syndetic.
\end{theorem}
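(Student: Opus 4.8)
The plan is to combine the Furstenberg Correspondence Principle (Theorem~\ref{FCP}) with Theorem~\ref{BDtheor} in a direct way. First I would apply Theorem~\ref{FCP} to the set $A$: since $\overline{Bd}(A)=\delta>0$, there is a measure-preserving system $(X,\mathfrak{B},\mu,T)$ and a set $E\in\mathfrak{B}$ with $\mu(E)=\delta$ such that, specializing the conclusion of Theorem~\ref{FCP} to a single index, for every $k\in\Z$ we have
\[
\delta_k=\overline{Bd}\big(A\cap (A-k)\big)\ \ge\ \mu\big(E\cap T^{-k}E\big).
\]

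Next I would apply Theorem~\ref{BDtheor} to this system with the set $E$ and the given $\varepsilon\in\,]0,1[$. This yields that the set
\[
F_0:=\{n\in\Z:\ \mu(E\cap T^{-n}E)>\mu(E)^2-\varepsilon\}=\{n\in\Z:\ \mu(E\cap T^{-n}E)>\delta^2-\varepsilon\}
\]
is syndetic. Finally I would note that $F_0$ is contained in the target set: if $k\in F_0$, then by the inequality above $\delta_k\ge\mu(E\cap T^{-k}E)>\delta^2-\varepsilon$, so $k\in\{k\in\Z:\delta_k>\delta^2-\varepsilon\}$. Since any set containing a syndetic set is itself syndetic, we conclude that $\{k\in\Z:\delta_k>\delta^2-\varepsilon\}$ is syndetic, which is the assertion.

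There is no genuine obstacle here — the statement is designed as an immediate corollary of the two quoted theorems. The only points needing a (routine) check are that the correspondence principle bounds the relevant Banach density \emph{from below} by the measure-theoretic quantity $\mu(E\cap T^{-k}E)$, so that the syndetic set produced by Theorem~\ref{BDtheor} lands inside $\{k:\delta_k>\delta^2-\varepsilon\}$ rather than merely intersecting it, and the trivial observation that enlarging a syndetic set preserves syndeticity.
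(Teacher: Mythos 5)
Your proof is correct and is exactly the argument the paper intends: the paper states Theorem~\ref{variant.theor8.BaRu} as a direct consequence of Theorem~\ref{FCP} and Theorem~\ref{BDtheor}, which is precisely the combination you spell out (lower bound $\delta_k\ge\mu(E\cap T^{-k}E)$ from the correspondence principle, syndeticity from the Bergelson--Downarowicz theorem, and stability of syndeticity under supersets). No differences to report.
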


In order to prove the equivalence between reiteratively hypercyclic weighted shifts and frequently hypercyclic weighted shifts on $\ell^p$, we need a little bit more precise result. This result and its proof are a direct adaptation of Theorem 8 in \cite{Bayart2} to sets with upper Banach density.  We include here the proof of this adaptation for the sake of completeness.

\begin{theorem}\label{lp1}
Let $A\subset \mathbb{Z_+}$ with $\overline{Bd}(A)=\delta>0$, $\varepsilon>0$ and $B_k=A\cap (A-k)$ where $k\ge 1$. Then there exist an increasing sequence $(m_i)_{i\ge 1}\subset \mathbb{N}$ and an increasing sequence $(k_i)_{i\ge 1}\subset \mathbb{N}$ such that
\begin{enumerate}
\item $\displaystyle{\frac{|A\cap[m_i,m_i+k_i[|}{k_i}\xrightarrow[i\to \infty]{} \delta}$;
\item $\displaystyle{\frac{|B_k\cap[m_i,m_i+k_i[|}{k_i}\xrightarrow[i\to \infty]{} \eta_k}$ for some $\eta_k$;
\item the set $F:=\{k\in \N:\eta_k>(1-\varepsilon)\delta^2\}$ is syndetic.
\end{enumerate}
\end{theorem}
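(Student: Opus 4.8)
The statement is an adaptation of Theorem~8 of Bayart–Ruzsa to sets of positive upper Banach density, so the plan is to follow that argument while feeding in the Furstenberg correspondence principle at the right moment. First I would fix a witnessing sequence of windows for the upper Banach density of $A$: since $\overline{Bd}(A)=\delta$, I can choose increasing sequences $(p_i)$ and $(s_i)$ with $s_i\to\infty$ and $|A\cap[p_i,p_i+s_i[|/s_i\to\delta$. By passing to a subsequence (a diagonal argument over $k$, using that each ratio lies in $[0,1]$) I can further arrange that $|B_k\cap[p_i,p_i+s_i[|/s_i$ converges, say to some $\eta_k$, for every $k\ge 1$ simultaneously; this already gives assertions~1 and~2 with $(m_i,k_i)=(p_i,s_i)$. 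The real content is assertion~3, the syndeticity of $F=\{k:\eta_k>(1-\varepsilon)\delta^2\}$.

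For assertion~3 the plan is to invoke Theorem~\ref{variant.theor8.BaRu}, which already tells us that $\{k\in\Z:\delta_k>\delta^2-\varepsilon'\}$ is syndetic, where $\delta_k=\overline{Bd}(B_k)$; here I would choose $\varepsilon'$ so that $\delta^2-\varepsilon'>(1-\varepsilon)\delta^2$, i.e.\ $\varepsilon'<\varepsilon\delta^2$. The subtlety is that $\delta_k=\overline{Bd}(B_k)$ is computed along windows that may be completely unrelated to the fixed windows $[p_i,p_i+s_i[$, whereas $\eta_k$ is the density of $B_k$ measured along precisely those windows, and a priori $\eta_k$ could be much smaller than $\delta_k$. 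So the main obstacle is to show that, after possibly refining the choice of windows once more, one still has $\eta_k$ close to $\delta^2$ for syndetically many $k$. The way Bayart–Ruzsa handle this (and the way I would reproduce it) is to run the correspondence principle \emph{inside} the chosen windows: using the windows $[p_i,p_i+s_i[$ one builds, via Theorem~\ref{FCP} (Furstenberg correspondence) applied to the sequence of normalized counting measures on these windows and a weak-$*$ limit, a measure-preserving system $(Y,\mathfrak{B},\mu,S)$ and a set $E$ with $\mu(E)=\delta$ and $\mu(E\cap S^{-k}E)\le\eta_k$ — in fact one arranges equality in the limit — so that the returns of $E$ are governed exactly by the $\eta_k$. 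Then Theorem~\ref{BDtheor} (the Bergelson–Host–Kra / Khintchine-type recurrence theorem) applied to $(Y,\mathfrak{B},\mu,S)$ and $E$ gives that $\{k:\mu(E\cap S^{-k}E)>\mu(E)^2-\varepsilon'\}=\{k:\mu(E\cap S^{-k}E)>\delta^2-\varepsilon'\}$ is syndetic, and since $\eta_k\ge\mu(E\cap S^{-k}E)$ this set is contained in $F$ once $\varepsilon'<\varepsilon\delta^2$. Hence $F$ is syndetic.

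The one point that needs care, and which I expect to be the genuine technical heart, is the construction of the limiting system so that the \emph{same} windows simultaneously witness $\overline{Bd}(A)=\delta$ \emph{and} carry all the correlation information $\mu(E\cap S^{-k}E)\le\eta_k$: this requires choosing the windows by a diagonalization that first pins down the $A$-density, then successively refines to make each correlation $|A\cap(A-k)\cap[p_i,p_i+s_i[|/s_i$ converge, and finally checks that the shift-invariance needed for the measure-preserving system survives the limit (one typically shifts each window to the origin and takes a weak-$*$ cluster point of the measures $\mu_i=\frac{1}{s_i}\sum_{n\in[p_i,p_i+s_i[}\delta_{(\mathbf{1}_A(n+j))_j}$ on $\{0,1\}^{\Z}$ with the shift). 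Once the system is in place, assertions~1–3 drop out as above. I would therefore structure the write-up as: (i) select the windows and define $\eta_k$, giving 1 and~2; (ii) build the measure-preserving system from these windows via the correspondence principle, recording $\mu(E)=\delta$ and $\mu(E\cap S^{-k}E)\le\eta_k$; (iii) apply Theorem~\ref{BDtheor} with $\varepsilon'<\varepsilon\delta^2$ and conclude that $F$ is syndetic.
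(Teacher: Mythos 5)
Your proposal is correct, but for the heart of the statement (assertion 3) it takes a genuinely different route from the paper. Assertions 1 and 2 you obtain exactly as the paper does: a window sequence witnessing $\overline{Bd}(A)=\delta$, followed by a diagonal extraction making every ratio $|B_k\cap[m_i,m_i+k_i[|/k_i$ converge to some $\eta_k$. For the syndeticity of $F$, however, the paper does not invoke Theorem~\ref{FCP} or Theorem~\ref{BDtheor} at all; it adapts the elementary second-moment argument of \cite[Theorem 8]{Bayart2}: take a finite set $R$ all of whose positive differences $k-l$ satisfy $\eta_{k-l}\le(1-\varepsilon)\delta^2$, compute the first and second moments of $f(x)=|\{k\in R:x\in A-k\}|$ over the windows $[m_i,m_i+k_i[$, and deduce from Cauchy--Schwarz that $|R|\le (1-\delta(1-\varepsilon))/(\delta\varepsilon)$; a maximal such $R$ then satisfies $F+R=\Z_+$, so $F$ is syndetic with an explicit gap bound. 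You instead rerun the Furstenberg correspondence along the prescribed windows (weak-$*$ limit of the normalized counting measures on $\{0,1\}^{\Z}$ with the shift, using clopen cylinder sets so that $\mu(E)=\delta$ and $\mu(E\cap S^{-k}E)=\eta_k$ along a further subsequence, which does not disturb assertions 1 and 2) and then apply Theorem~\ref{BDtheor} with $\varepsilon'<\varepsilon\delta^2$. You correctly identified the genuine subtlety, namely that Theorem~\ref{variant.theor8.BaRu} is insufficient because $\delta_k=\overline{Bd}(B_k)$ is measured along windows unrelated to the fixed ones, so the correspondence principle must be rebuilt from the chosen windows rather than quoted as a black box; your treatment of this (shift-invariance of the limit measure because the window lengths tend to infinity, convergence on clopen sets, restriction of the syndetic set to $\N$) is sound. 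The trade-off is that your route reuses the ergodic machinery the paper states anyway and would upgrade automatically if one replaces Theorem~\ref{BDtheor} by stronger recurrence theorems, whereas the paper's argument is elementary, self-contained, and gives quantitative control of the gaps of $F$ via the bound on $|R|$.
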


\begin{proof}
Since $\overline{Bd}(A)=\delta$, there exists an increasing sequence $(n_i)_{i\ge 1}$ such that
\[\frac{|A\cap[n_i,n_i+i[|}{i}\xrightarrow[i\to \infty]{} \delta.\]
Since for any $k\ge 1$, the set $\big(\frac{|B_k\cap[n_i,n_i+i[|}{i}\big)_i$ belongs to the compact $[0,1]$, we can extract a subsequence $(n_{k_i})_{i\ge 1}$ such that for any $k\ge 1$, $\big(\frac{|B_k\cap[n_{k_i},n_{k_i}+k_i[}{k_i}\big)_i$ converges to some $\eta_k$. We denote by $(m_i)_{i\ge 1}$ the sequence $(n_{k_i})_{i\ge 1}$.

Let $R$ be a finite set such that for any $k,l\in R$, $k>l$, we have $\eta_{k-l}\le (1-\varepsilon)\delta^2.$
We consider $f(x):=|\{k\in R: x\in A-k\}|$. We deduce that for any $i\ge 1$
\begin{align*}
\sum_{x\in[m_i,m_i+k_i[}f(x)&=\sum_{k\in R}|(A-k)\cap [m_i,m_i+k_i[|\\
&=|R| |A\cap [m_i,m_i+k_i[|+ O(1).
\end{align*}
and thus
\[\frac{1}{k_i}\sum_{x\in[m_i,m_i+k_i[}f(x)\xrightarrow[i\to \infty]{}|R|\delta.\]
If we now consider the square of $f(x)$, we have
\[f(x)^2=|\{k,l\in R:x\in (A-k)\cap (A-l)\}|=|\{k,l\in R:x+k\in A\cap (A+k-l)\}|.\]
In the same way, we deduce that for any $i\ge 1$
\begin{align*}
&\sum_{x\in[m_i,m_i+k_i[}f(x)^2\\
\quad &=\sum_{k\in R}|A\cap [m_i,m_i+k_i[|+
2\sum_{k,l\in R,\ k>l}|B_{k-l}\cap [m_i,m_i+k_i[|+
O(1)\\
\quad &=|R||A\cap [m_i,m_i+k_i[|+
2\sum_{k,l\in R,\ k>l}|B_{k-l}\cap [m_i,m_i+k_i[|+ O(1)
\end{align*}
and thus
\[\frac{1}{k_i}\sum_{x\in[m_i,m_i+k_i[}f(x)^2\xrightarrow[i\to \infty]{}|R|\delta+2\sum_{k,l\in R,\ k>l}\eta_{k-l}\le |R|\delta+(1-\varepsilon)|R|(|R|-1)\delta^2.\]
We conclude that
\[(|R|\delta)^2\le |R|\delta+(1-\varepsilon)|R|(|R|-1)\delta^2.\]
This inequality gives us the following condition on $|R|$:
\[|R|\le \frac{1-\delta(1-\varepsilon)}{\delta\varepsilon}.\]
In other words, there exists a maximal finite set $R$ in the sense that for any $n\notin R$, there exists $k\in R$ such that $\eta_{n-k}>(1-\varepsilon)\delta^2$, i.e $n-k\in F$. We conclude that $F+R=\Z_+$ and in particular that $F$ is syndetic.
\end{proof}

Therefore, in the vein of \cite[Corollary 9]{Bayart2} and following essentially the same proof we obtain the following result:
\begin{corollary}\label{corbeta}
Let $A\subset \Z_+$ be a set with positive upper Banach density and $(\alpha_n)_{n\in \Z}$ a sequence of non-negative real numbers such that $\sum_n \alpha_n=+\infty$. Suppose that there exist some $C>0$ and $N\in \Z\cup\{+\infty\}$ such that $\alpha_{n}\geq C\alpha_{n-1}$ for every $n<N$ and $\alpha_{n}=0$ for every $n\ge N$. If for any $n\in A$, we let
\[
\beta_n=\sum_{m\in A}\alpha_{m-n},
\]
then the sequence $(\beta_n)_{n\in A}$ is not bounded.
\end{corollary}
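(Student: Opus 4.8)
The plan is to argue by contradiction: assume $M:=\sup_{n\in A}\beta_n<\infty$ (if some $\beta_n=+\infty$ there is nothing to prove). First I would apply Theorem~\ref{lp1} with a fixed $\varepsilon\in(0,1)$, say $\varepsilon=\tfrac12$, to get increasing sequences $(m_i)$ and $(k_i)$ with $|A\cap[m_i,m_i+k_i[|/k_i\to\delta$, with $|B_k\cap[m_i,m_i+k_i[|/k_i\to\eta_k$ for every $k\ge1$, and with $F:=\{k\in\N:\eta_k>(1-\varepsilon)\delta^2\}$ syndetic. I would then record the symmetry bookkeeping: setting $\eta_0:=\delta$ and, for $k\le 0$, $B_k:=\{n\in A:n+k\in A\}$ (so $B_{-k}=B_k+k$ and the $O(|k|)$ boundary correction is negligible after dividing by $k_i$), one gets $|B_k\cap[m_i,m_i+k_i[|/k_i\to\eta_{|k|}$ for all $k\in\Z$. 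Consequently $F':=\{k\in\Z:\eta_{|k|}>(1-\varepsilon)\delta^2\}\supseteq F\cup(-F)\cup\{0\}$ is syndetic in $\Z$ with some gap bound $L$.

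The heart of the argument is an averaging identity over the windows $[m_i,m_i+k_i[$. Summing $\beta_n=\sum_{m\in A}\alpha_{m-n}$ over $n\in A\cap[m_i,m_i+k_i[$ and interchanging the (non-negative) summations via the substitution $k=m-n$ gives
\[
\sum_{n\in A\cap[m_i,m_i+k_i[}\beta_n=\sum_{k\in\Z}\alpha_k\,\bigl|B_k\cap[m_i,m_i+k_i[\bigr|.
\]
Using $\beta_n\le M$ on the left, dividing by $k_i$, and letting $i\to\infty$, the right-hand side is bounded by $M\,|A\cap[m_i,m_i+k_i[|/k_i\to M\delta$, while by Fatou's lemma for series (all terms $\ge 0$) its $\liminf$ is at least $\sum_{k\in\Z}\alpha_k\eta_{|k|}$. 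Hence $\sum_{k\in\Z}\alpha_k\eta_{|k|}\le M\delta<\infty$, and restricting to $F'$, where $\eta_{|k|}>(1-\varepsilon)\delta^2$, I obtain $\sum_{k\in F'}\alpha_k\le \frac{M}{(1-\varepsilon)\delta}<\infty$.

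It remains to contradict this by proving $\sum_{k\in F'}\alpha_k=\infty$, and this is the step I expect to be the main obstacle: the divergent series $\sum_{k\in\Z}\alpha_k$ need not concentrate on $F'$, and when $N<\infty$ its mass may lie far to the left. The geometric condition is exactly what repairs this. Iterating $\alpha_j\ge C\alpha_{j-1}$ for $j<N$ gives $\alpha_{k'}\ge C^{k'-k}\alpha_k$ whenever $k\le k'<N$, hence $\alpha_k\le\max(1,C^{-L})\,\alpha_{k'}$ for any $k'\in F'\cap[k,k+L]$ with $k'<N$. When $N=+\infty$, every $k\in\Z$ admits such a $k'$ by syndeticity of $F'$, and each element of $F'$ is assigned to at most $L+1$ indices $k$; this yields $\sum_{k\in\Z}\alpha_k\le(L+1)\max(1,C^{-L})\sum_{k'\in F'}\alpha_{k'}$, forcing the right side to be infinite. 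When $N<\infty$, the same estimate applies to all $k\le N-L-1$, while $\sum_{N-L-1<k<N}\alpha_k$ is a finite sum of finitely many reals and $\alpha_k=0$ for $k\ge N$, so the divergence of $\sum_{k\in\Z}\alpha_k$ must already occur among the indices $k\le N-L-1$; again $\sum_{k'\in F'}\alpha_{k'}=\infty$. In both cases this contradicts the previous paragraph, so $(\beta_n)_{n\in A}$ is unbounded. The only remaining points, all routine, are the interchange of summations for non-negative terms, the shifted-set/boundary bookkeeping for $B_k$ with $k\le 0$, and checking $0\in F'$ (which follows from $(1-\varepsilon)\delta<1$); none affect the limits.
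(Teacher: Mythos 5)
Your proof is correct and follows essentially the same route as the paper: invoke Theorem~\ref{lp1}, extend the window densities to negative shifts via $B_{-k}=B_k+k$, average $\beta_n$ over the windows $[m_i,m_i+k_i[$ to bound $\sum_k\alpha_k\eta_{|k|}$, and use syndeticity together with $\alpha_n\ge C\alpha_{n-1}$ to force $\sum_{k\in F'}\alpha_k=\infty$. The only differences (Fatou versus truncation at $|k|<l$ with a $\limsup$, and your multiplicity-$(L+1)$ covering versus the paper's block sums over $(f_{j-1},f_j]$) are cosmetic.
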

\begin{proof}
Let $\delta:=\overline{Bd}(A)$ and $B_k=A\cap (A-k)$ for any $k\in \Z$. By Theorem~\ref{lp1}, we know that there exist an increasing sequence $(m_i)_{i\ge 1}\subset \mathbb{N}$ and an increasing sequence $(k_i)_{i\ge 1}\subset \mathbb{N}$ such that for any $k\ge 1$
\begin{enumerate}
\item $\displaystyle{\frac{|B_k\cap[m_i,m_i+k_i[|}{k_i}\xrightarrow[i\to \infty]{} \eta_k}$ for some $\eta_k$;
\item the set $F:=\{k\ge 1:\eta_k>\frac{1}{2}\delta^2\}$ is syndetic.
\end{enumerate}

We remark that for any $k\ge 1$, we have $B_{-k}=B_k+k$ and thus
\[ \frac{|B_{-k}\cap[m_i,m_i+k_i[|}{k_i}=\frac{|B_{k}\cap[m_i-k,m_i+k_i-k[|}{k_i}\xrightarrow[i\to \infty]{} \eta_k.\]

On the other hand, if we let $F_{\Z}=(-F)\cup F$, we deduce that the set $F_{\Z}$ is syndetic. Let $(f_j)_{j\in \Z}$ be an increasing enumeration of $F_{\Z}$ and $M\ge 1$ such that $f_j-f_{j-1}\le M$ for any $j\in \Z$. We deduce that for any $f_j<N$, we have
\[\alpha_{f_j}\ge \frac{\min(1,C^M)}{M}\sum_{f_{j-1}<i\le f_j}\alpha_i\]
and thus
\[\sum_{n\in F_{\Z}}\alpha_n=\sum_{j:f_j<N}\alpha_{f_j}\ge \frac{\min(1,C^M)}{M} \sum_{n<N-M}\alpha_n
=\frac{\min(1,C^M)}{M} \Big(\sum_{n\in \Z}\alpha_n -\sum_{n=N-M}^{N-1}\alpha_n\Big)=\infty,
\]
where we consider $\sum_{n=N-M}^{N-1}\alpha_n=0$ if $N=+\infty$.


We then consider the sequence $(s_i)_{i \ge 1}$ defined by
\[
s_i=\sum_{n\in A\cap[m_i,m_i+k_i[} \beta_n=\sum_{\substack{n\in A\cap[m_i,m_i+k_i[\\ m\in A}}\alpha_{m-n}.
\]
If we arrange this sum according to the value $k=m-n$ and if we keep only the terms where $k\in F_{\Z}$, then we get for any $l\ge 1$,
 \[
 s_i\geq \sum_{k\in F_{\Z}, \abs{k}< l}\alpha_k \abs{B_k \cap[m_i,m_i+k_i[}.
 \]
We deduce that for any $l\ge 1$
 \begin{align*}
 \limsup_{i\to \infty}\frac{1}{k_i} \sum_{n\in A\cap [m_i,m_i+k_i[} \beta_n&= \limsup_{i\to \infty}\frac{s_i}{k_i}\\
 &\ge \sum_{k\in F_{\Z}, \abs{k}< l}\alpha_k \lim_{i\to \infty}\frac{\abs{B_k\cap[m_i,m_i+k_i[}}{k_i}\\
 &\ge \sum_{k\in F_{\Z}, \abs{k}<l}\alpha_k \eta_{\abs{k}} \ge \frac{\delta^2}{2}\sum_{k\in F_{\Z}, \abs{k}<l}\alpha_k.
 \end{align*}
Since $\sum_{n\in F_{\Z}}\alpha_n=+\infty$, we conclude that the sequence $(\beta_n)_{n\in A}$ cannot be bounded.
\end{proof}

We can now prove the desired equivalence for weighted shifts on $\ell^p$.

\begin{theorem}\label{caraclp}
Let $B_w$ be a weighted shift on $\ell^p(\Z)$ or on $\ell^p(\Z_+)$ with $1\le p<\infty$.
Then $B_w$ is reiteratively hypercyclic if and only if $B_w$ is frequently hypercyclic.
\end{theorem}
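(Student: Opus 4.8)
The plan is to prove the non-trivial implication: reiteratively hypercyclic $\Rightarrow$ frequently hypercyclic. The converse is immediate since frequent hypercyclicity implies $\mathfrak U$-frequent hypercyclicity implies reiterative hypercyclicity. I would first reduce to a statement about the weight sequence. By Theorem~\ref{characshift} (applied, in the $\ell^p(\Z)$ case, after the standard reduction to one-sided shifts, or by the analogous characterization for bilateral shifts), if $B_w$ is reiteratively hypercyclic then there is a set $A$ with $\overline{Bd}(A)>0$ such that $\sum_{n\in A} e_n/\prod_{\nu=1}^n w_\nu \in \ell^p$, i.e.
\[
\sum_{n\in A}\frac{1}{\prod_{\nu=1}^n |w_\nu|^p}<\infty.
\]
The goal is to upgrade this to the chaoticity condition \eqref{9caraclp}, namely $\sum_{n\ge 1}\prod_{\nu=1}^n|w_\nu|^{-p}<\infty$, since by the Bayart--Ruzsa result recalled in the excerpt this is equivalent to frequent hypercyclicity of $B_w$ on $\ell^p$.

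\textbf{Key step.} The heart of the argument is Corollary~\ref{corbeta}, applied with $\alpha_n := \prod_{\nu=1}^n|w_\nu|^{-p}$ for $n\ge 1$ (and $\alpha_n:=1$, or suitably, for $n\le 0$, with $N=+\infty$ in the one-sided case). One checks the hypothesis $\alpha_n\ge C\alpha_{n-1}$: indeed $\alpha_n/\alpha_{n-1}=|w_n|^{-p}$, and since $B_w$ is bounded the weights are bounded above, so $|w_n|^{-p}$ is bounded below by a positive constant $C$; monotonicity of the partial sums forces $\sum_n\alpha_n=+\infty$ unless \eqref{9caraclp} already holds, in which case we are done. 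Now argue by contradiction: if $\sum_n\alpha_n=+\infty$, set $\beta_n=\sum_{m\in A}\alpha_{m-n}$ for $n\in A$. Corollary~\ref{corbeta} says $(\beta_n)_{n\in A}$ is unbounded. But for $n\in A$,
\[
\beta_n=\sum_{m\in A}\prod_{\nu=1}^{m-n}|w_\nu|^{-p}
=\Big(\prod_{\nu=1}^{n}|w_\nu|^{p}\Big)\sum_{m\in A,\, m>n}\prod_{\nu=1}^{m}|w_\nu|^{-p}
\le \Big(\prod_{\nu=1}^{n}|w_\nu|^{p}\Big)\cdot C'
\]
where the key point is that the reiterative-hypercyclicity condition gives $\prod_{\nu=1}^n|w_\nu|^p\to\infty$ along $n\in A$ while simultaneously $\sum_{m\in A}\prod_{\nu=1}^m|w_\nu|^{-p}<\infty$; combining these, one shows $\beta_n\to 0$ (or at least stays bounded) along $A$, contradicting unboundedness. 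The contradiction forces \eqref{9caraclp}.

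\textbf{Main obstacle and bookkeeping.} I expect the main subtlety to be two-fold. First, matching the index conventions of Corollary~\ref{corbeta} (which is stated for sequences $(\alpha_n)_{n\in\Z}$) to the one-sided shift on $\ell^p(\Z_+)$: one must set $N=+\infty$ and extend $\alpha_n$ to negative indices in a way that keeps $\alpha_n\ge C\alpha_{n-1}$ and $\sum\alpha_n=\infty$ — extending by a constant on $n\le 0$ works and does not interfere with the estimate on $\beta_n$ since only differences $m-n$ with $m>n$ in $A$ contribute meaningful terms. Second, and more delicate, is the $\ell^p(\Z)$ case: here one should first invoke the reduction of bilateral weighted shifts to the setting where the characterization of reiteratively hypercyclic shifts is available (or adapt Theorem~\ref{characshift} to $\Z$), and then run the same Corollary~\ref{corbeta} argument with $N$ possibly finite if the weights force $\alpha_n$ to vanish. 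The computation showing $\beta_n$ bounded along $A$ from the two summability facts is the real content, but it is a short manipulation once the right quantities are identified; everything else is translating between "operator" and "weight sequence" language.
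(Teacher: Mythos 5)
Your overall strategy (reduce to summability of the weight products via Corollary~\ref{corbeta} and then invoke Bayart--Ruzsa) is the same as the paper's, but the key step as you wrote it does not work. The displayed identity
\[
\prod_{\nu=1}^{m-n}|w_\nu|^{-p}=\Big(\prod_{\nu=1}^{n}|w_\nu|^{p}\Big)\prod_{\nu=1}^{m}|w_\nu|^{-p}
\]
is false for non-constant weights: the quotient $\prod_{\nu=1}^{m}|w_\nu|^{-p}\big/\prod_{\nu=1}^{n}|w_\nu|^{-p}$ equals $\prod_{\nu=n+1}^{m}|w_\nu|^{-p}$, not $\prod_{\nu=1}^{m-n}|w_\nu|^{-p}$. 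Consequently the hypothesis you extract from Theorem~\ref{characshift}, namely condition ii.\ ($\sum_{n\in A}\prod_{\nu=1}^{n}|w_\nu|^{-p}<\infty$), does \emph{not} give the bound on $\beta_n=\sum_{m\in A,\,m>n}\prod_{\nu=1}^{m-n}|w_\nu|^{-p}$ that Corollary~\ref{corbeta} must contradict; what is needed is a uniform bound on sums of products over \emph{initial} segments of length $m-n$, i.e.\ sums indexed by the difference set $A-n$. This is exactly what the paper manufactures, not from Theorem~\ref{characshift} at all, but directly from the orbit: choosing $x$ with $A=\{n:\|B_w^nx-e_0\|\le\frac12\}$ of positive upper Banach density, the coordinate of $B_w^nx$ at position $m-n$ (for $m\in A$, $m>n$) has modulus at least $\tfrac12\,|w_1\cdots w_{m-n}|^{-1}$ because $|w_1\cdots w_mx_m|\ge\frac12$, whence $\sum_{m>n,\,m\in A}|w_1\cdots w_{m-n}|^{-p}\le 1$ uniformly in $n\in A$ (and an analogous bound $\sum_{m<n,\,m\in A}|w_{m-n+1}\cdots w_0|^{p}\le 1$ on the negative side). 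If you insist on going through Theorem~\ref{characshift} in the unilateral case, it is condition iii.\ (with $k'=0$, $l=k$), which controls sums over $A_k-j$, that you must use, not condition ii.

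Two further bookkeeping points would also need repair. First, extending $\alpha$ by a constant on $n\le 0$ destroys the boundedness of $\beta_n$: it adds $|\{m\in A:m\le n\}|$ to $\beta_n$, which is unbounded along $A$; the correct choice (as in the paper) is $\alpha_n=0$ for $n\le 0$, which is compatible with the hypothesis $\alpha_n\ge C\alpha_{n-1}$. Second, the bilateral case cannot be handled by appealing to Theorem~\ref{characshift} (stated only for $\ell^p(\Z_+)$ and $c_0(\Z_+)$) or to a ``standard reduction''; the paper's direct orbit argument avoids any bilateral characterization, and it must produce \emph{two} conclusions, $\sum_{n\ge1}|w_1\cdots w_n|^{-p}<\infty$ and $\sum_{n<0}|w_n\cdots w_0|^{p}<\infty$, via two applications of Corollary~\ref{corbeta} (with $N=+\infty$ and $N=0$ respectively), since both are required for frequent hypercyclicity of a bilateral shift. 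As written, your argument establishes neither series without the corrected uniform bound on the difference-indexed sums.
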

\begin{proof}
We only prove this equivalence in the case of weighted shifts on $\ell^p(\Z)$ since the case of weighted shifts on  $\ell^p(\Z_+)$ is similar and easier.

Let $B_w$ be a reiteratively hypercyclic weighted shift on $\ell^p(\Z)$.
There exists a vector $x\in \ell^p(\Z_+)$ such that the set
\[A:=\left\{n\in \Z_+: \|B_w^nx-e_0\|\le \frac{1}{2}\right\}\]
has a positive upper Banach density. For any $n\in A$, we remark that we have
$|w_1\cdots w_n x_n-1|\le \frac{1}{2}$ and
\begin{align*}
\frac{1}{2^p}&\ge \sum_{m<n}|w_{m-n+1}\cdots w_0w_1\cdots w_{m}|^p|x_m|^p+ \sum_{m>n}|w_{m-n+1}\cdots w_{m}|^p|x_m|^p\\
&= \sum_{m<n}|w_{m-n+1}\cdots w_0|^p|w_1\cdots w_{m}x_m|^p+ \sum_{m>n}\frac{|w_1\cdots w_mx_m|^p}{|w_1\cdots w_{m-n}|^p}\\
&\ge \sum_{m<n,m\in A}|w_{m-n+1}\cdots w_0|^p|w_1\cdots w_{m}x_m|^p+ \sum_{m>n, m\in A}\frac{|w_1\cdots w_mx_m|^p}{|w_1\cdots w_{m-n}|^p}\\
&\ge \frac{1}{2^p}\left(\sum_{m<n,m\in A}|w_{m-n+1}\cdots w_0|^p+ \sum_{m>n, m\in A}\frac{1}{|w_1\cdots w_{m-n}|^p}\right).
\end{align*}
We get for any $n\in A$
\begin{equation}
\sum_{m<n,m\in A}|w_{m-n+1}\cdots w_0|^p\le 1 \quad\text{and}\quad \sum_{m>n, m\in A}\frac{1}{|w_1\cdots w_{m-n}|^p}\le 1.
\label{eqbeta}
\end{equation}
Thanks to Corollary~\ref{corbeta}, we can deduce from \eqref{eqbeta} the convergence of series $\sum_{n\ge 1}\frac{1}{|w_{0}\cdots w_{n}|^p}$ and $\sum_{n<0}|w_{n}\cdots w_{0}|^p$. Indeed, if we let $\alpha_n=0$ for any $n\le 0$ and $\alpha_n=\frac{1}{|w_1\cdots w_n|^p}$ for any $n\ge 1$, we have $\alpha_n\ge C\alpha_{n-1}$ where $C=\inf\{|w_n|^{-p}:n\ge 1\}$ is strictly positive since $w$ is bounded. Therefore, if $\sum_{n\ge 1}\frac{1}{|w_{1}\cdots w_{n}|^p}=\infty$, we deduce from Corollary~\ref{corbeta} with $N=+\infty$ that the sequence $(\beta_{n})_{n\in A}$ is unbounded where
\[\beta_n:=\sum_{m\in A}\alpha_{m-n}=\sum_{m>n,m\in A}\alpha_{m-n}=\sum_{m>n, m\in A}\frac{1}{|w_1\cdots w_{m-n}|^p}.\]
This is a contradiction with~$\eqref{eqbeta}$.

On the other hand, if we let $\alpha_n=0$ for any $n\ge 0$ and $\alpha_{n}=|w_{n+1}\cdots w_0|^p$ for any $n<0$, we have $\alpha_{n}\ge C\alpha_{n-1}$ for any $n<0$ where $C=\inf\{|w_n|^{-p}:n\le -1\}$. As previously, we deduce from Corollary~\ref{corbeta} with $N=0$ that if $\sum_{n<0}|w_{n}\cdots w_{0}|^p=\infty$ then the sequence $(\sum_{m<n,m\in A}|w_{m-n+1}\cdots w_0|^p)_{n\in A}$ is unbounded which is a contradiction with~$\eqref{eqbeta}$.

Finally, we get the desired result since the convergence of series $\sum_{n\ge 1}\frac{1}{|w_{0}\cdots w_{n}|^p}$ and $\sum_{n<0}|w_{n}\cdots w_{0}|^p$ implies that $B_w$ is frequently hypercyclic (\cite[Theorem~3]{Bayart2})
\end{proof}

This equivalence for weighted shifts on $\ell^p(\Z_+)$ implies that there exists some mixing operator which is not reiteratively hypercyclic. Indeed, we know that a weighted shift $B_w$ on $\ell^p(\Z_+)$ is mixing if and only if $\prod_{k=1}^{\infty}|w_k|$ tends to infinity~\cite{Costakis}. Therefore, the weighted backward shift $B_w$ with $w_n=\big((n+1)/n)^{\frac{1}{p}}$ is a mixing operator on $\ell^p(\Z_+)$ which is not frequently hypercyclic and thus not reiteratively hypercyclic (Theorem~\ref{caraclp}).

\begin{theorem}
There exists some mixing operator which is not reiteratively hypercyclic.
\end{theorem}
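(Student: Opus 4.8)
The plan is to exhibit a single explicit weighted backward shift on $\ell^p(\Z_+)$ that is mixing yet fails the summability condition~\eqref{9caraclp} characterizing frequently hypercyclic weighted shifts on $\ell^p$, and then to upgrade ``not frequently hypercyclic'' to ``not reiteratively hypercyclic'' by invoking Theorem~\ref{caraclp}.

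First I would recall the description of mixing weighted shifts due to Costakis~\cite{Costakis}: a weighted backward shift $B_w$ on $\ell^p(\Z_+)$ is mixing if and only if $\prod_{\nu=1}^{n}|w_\nu|\to\infty$. With this in hand I would set $w_n:=\big((n+1)/n\big)^{1/p}$ for $n\ge 1$. This sequence is bounded (indeed $w_n\downarrow 1$), so $B_w\in\mathcal L(\ell^p(\Z_+))$, and a telescoping computation gives $\prod_{\nu=1}^{n}w_\nu=(n+1)^{1/p}\to\infty$; hence $B_w$ is mixing.

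Next I would check that $B_w$ is not frequently hypercyclic: since $\prod_{\nu=1}^{n}|w_\nu|^p=n+1$, the series in~\eqref{9caraclp} is $\sum_{n\ge 1}\frac{1}{n+1}=+\infty$, so $B_w$ is not chaotic and thus, by the result of Bayart and Ruzsa recalled before~\eqref{9caraclp}, not frequently hypercyclic. Finally, Theorem~\ref{caraclp} asserts that reiterative hypercyclicity and frequent hypercyclicity coincide for weighted shifts on $\ell^p(\Z_+)$; therefore $B_w$ is not reiteratively hypercyclic, while it is mixing, which proves the statement.

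There is no genuine obstacle here: the analytic content is already packaged in Theorem~\ref{caraclp}. The only point requiring a little care is the choice of weight, which must simultaneously be bounded (so that $B_w$ is a legitimate operator), satisfy $\prod_{\nu=1}^{n}|w_\nu|\to\infty$ (to be mixing), and have $\prod_{\nu=1}^{n}|w_\nu|^p$ growing only linearly in $n$ (so that the series in~\eqref{9caraclp} diverges). The exponent $1/p$ in $w_n=\big((n+1)/n\big)^{1/p}$ is precisely what reconciles these three requirements.
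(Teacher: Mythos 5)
Your proposal is correct and matches the paper's own argument: the paper uses exactly the weight $w_n=\big((n+1)/n\big)^{1/p}$, the Costakis mixing criterion, the Bayart--Ruzsa characterization to rule out frequent hypercyclicity, and Theorem~\ref{caraclp} to conclude. Nothing is missing.
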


Thanks to Grosse-Erdmann and Peris~\cite{Grosse0}, we know that every frequently hypercyclic operator is weakly mixing. We observe that every reiteratively hypercyclic operator is  topologically ergodic (thus, weakly mixing \cite{GEP10}). See also \cite{BG07}.

\begin{proposition}\label{wm}
Let $X$ be a separable $F$-space and $T\in \mathcal{L}(X)$.
If $T$ is reiteratively hypercyclic, then $T$ is topologically ergodic.
\end{proposition}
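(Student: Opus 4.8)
The plan is to show that for every pair of nonempty open sets $U,V\subseteq X$ the set
$N(U,V)=\{n\ge 0:T^nU\cap V\ne\emptyset\}$ has bounded gaps, by locating inside it a translate of a difference set $A-A$ for a well-chosen $A$ of positive upper Banach density, and then applying Theorem~\ref{variant.theor8.BaRu} to conclude that $A-A$ is syndetic.

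\textbf{Setup.} Let $x$ be a reiteratively hypercyclic vector for $T$ and fix nonempty open sets $U,V$. Since $\orb(x,T)$ is dense, choose $q\ge 0$ with $T^qx\in U$; by continuity of $T^q$ there is an open neighbourhood $W_0$ of $x$ with $T^q(W_0)\subseteq U$. Since $X$ has no isolated points, $N(x,V)$ is infinite, so we may choose $p>q$ with $T^px\in V$ and, again by continuity, an open neighbourhood $W_1$ of $x$ with $T^p(W_1)\subseteq V$. Put $W:=W_0\cap W_1$, a nonempty open set (it contains $x$), and $A:=N(x,W)$. Reiterative hypercyclicity of $x$ gives $\overline{Bd}(A)=:\delta>0$.

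\textbf{Core inclusion.} If $a,b\in A$ with $b\ge a$, then $T^ax\in W\subseteq W_0$ gives $u:=T^{a+q}x=T^q(T^ax)\in U$, and $T^bx\in W\subseteq W_1$ gives $T^{b+p}x=T^p(T^bx)\in V$. Since $(b+p)-(a+q)=(b-a)+(p-q)\ge p-q\ge 1$, we get $T^{(b-a)+(p-q)}u=T^{b+p}x\in V$, hence $(b-a)+(p-q)\in N(U,V)$. Therefore
\[
N(U,V)\;\supseteq\;(p-q)+\bigl((A-A)\cap\Z_{\ge 0}\bigr).
\]

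\textbf{Conclusion.} Fix $\eps$ with $0<\eps<\delta^2$ and apply Theorem~\ref{variant.theor8.BaRu} to $A$: the set $S:=\{k\in\Z:\overline{Bd}(A\cap(A-k))>\delta^2-\eps\}$ is syndetic in $\Z$; it contains $0$ (as $\overline{Bd}(A\cap A)=\delta>\delta^2-\eps$), each of its elements $k$ satisfies $A\cap(A-k)\ne\emptyset$, i.e.\ $k\in A-A$, and $A-A$ is symmetric. Hence $A-A$, and so its nonnegative part, is syndetic; translating by $p-q>0$ and intersecting with $\Z_{\ge 0}$ preserves the (finite) gap bound, so $N(U,V)$ contains a syndetic set and is therefore syndetic. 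Thus $T$ is topologically ergodic.

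\textbf{Where the work sits.} The only non-elementary input is the syndeticity of $A-A$ for a set $A$ of positive upper Banach density, which is exactly what Theorem~\ref{variant.theor8.BaRu} (via the Furstenberg correspondence principle, Theorem~\ref{FCP}, and Theorem~\ref{BDtheor}) supplies; if one prefers a self-contained argument, this can also be obtained by greedily building arbitrarily many pairwise disjoint translates $A+n_1,\dots,A+n_N$ inside the thick set $(A-A)^{c}$ and comparing with $\overline{Bd}(A)$ on a long window. Beyond that, the proof is a routine continuity-and-density argument; the only bookkeeping to keep straight is the choice $p>q$ (so that the shift $p-q$ is positive) and the passage from syndeticity in $\Z$ to syndeticity in $\Z_{\ge 0}$, both of which are immediate.
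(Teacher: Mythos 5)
Your proof is correct and follows essentially the same route as the paper: you place a translate of the difference set $A-A$ of a return set $A$ of positive upper Banach density inside $N(U,V)$ and then use that such difference sets are syndetic. The only cosmetic differences are that the paper takes $A=N(x,U\cap T^{-n}V)$ for some $n\in N(U,V)$ instead of your neighbourhood of $x$ pulled back from $U$ and $V$, and it cites Furstenberg's result on syndeticity of $A-A$ directly rather than deriving it from Theorem~\ref{variant.theor8.BaRu}.
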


\begin{proof}
Let $U, V$ non-empty open sets in $X$ and $n\in N(U, V)$. We consider the non-empty open set $U_n:=U\cap T^{-n}(V)$. Let $x\in X$ such that $\overline{Bd}\left(N(x, U_n)\right)>0$.
We remark that
 \begin{equation*}
 N(x, U_n)-N(x, U_n)+n\subseteq N(U, V).
 \end{equation*}
Indeed, if $s_1, s_2\in N(x, U_n)$, then
\[
 T^{s_2}x\in U\quad\text{and}\quad T^{s_1-s_2+n}(T^{s_2}x)=T^{n}(T^{s_1}x)\in V
\]
On the other hand, we know that if $A$ is a set with positive upper Banach density, then $A-A$ is syndetic~\cite[Proposition 3.19]{Furstenberg}. We conclude that $N(x, U_n)-N(x, U_n)$ is syndetic and thus $N(U, V)$ is also syndetic.
\end{proof}

\end{document}